\def\dashint{\,\ThisStyle{\ensurestackMath{%
  \stackinset{c}{.2\LMpt}{c}{.5\LMpt}{\SavedStyle-}{\SavedStyle\phantom{\int}}}%
  \setbox0=\hbox{$\SavedStyle\int\,$}\kern-\wd0}\int}
\def\ddashint{\,\ThisStyle{\ensurestackMath{%
  \stackinset{c}{.2\LMpt}{c}{.5\LMpt+.2\LMex}{\SavedStyle-}{%
    \stackinset{c}{.2\LMpt}{c}{.5\LMpt-.2\LMex}{\SavedStyle-}{%
      \SavedStyle\phantom{\int}}}}\setbox0=\hbox{$\SavedStyle\int\,$}\kern-\wd0}\int}
\theoremstyle{plain}
\begin{document}


\theoremstyle{plain}
\newtheorem{theorem}{Theorem} [section]
\newtheorem{corollary}[theorem]{Corollary}
\newtheorem{lemma}[theorem]{Lemma}
\newtheorem{proposition}[theorem]{Proposition}
\newtheorem{example}[theorem]{Example}


\theoremstyle{definition}
\newtheorem{definition}[theorem]{Definition}
\theoremstyle{remark}
\newtheorem{remark}[theorem]{Remark}

\def\RR{{\mathbb R}}
\def\NN{{\mathbb N}}
\def\N{{\mathbb N}}
\def\LE{\mathcal{E}}
\def\gabo{\textcolor{red}}
\def\fran{\textcolor{magenta}}
\def\julio{\textcolor{blue}}
\def\pp{\partial \Omega \cap \partial \Omega_{\ell}}
\def\V{\mathbb{V}}
\def\U{\mathbb{U}}

 \def\T{{\mathcal {T}}}
\def\uu{\underline{u}}
\def\vv{\underline{v}}
\numberwithin{theorem}{section}
\numberwithin{equation}{section}

\title[Coupling local and nonlocal equations]{A domain decomposition scheme for couplings between local and nonlocal equations.}

\thanks{{\it Keywords: } Local equations, nonlocal equations, Dirichlet boundary conditions, Domain decomposition.
\\
\indent 2020 {\it Mathematics Subject Classification:}
35R11, 
45K05, 
65N30, 
47G20. 
\\
\indent
G.A. partially supported by ANPCyT under grant PICT 2018 - 3017  (Argentina) and MATHAMSUD 22MATH-04. \newline
\indent F.M.B. partially supported by ANID through FONDECYT Project 3220254, by ANPCyT under grant PICT 2018 - 3017 (Argentina), and MATHAMSUD 22MATH-04.
 \newline
\indent J.D.R. partially supported by
CONICET grant PIP GI No 11220150100036CO
(Argentina), PICT 2018 - 3183 (Argentina), UBACyT grant 20020160100155BA (Argentina) and MATHAMSUD 22MATH-04.}

\author[G. Acosta]{Gabriel Acosta}
\address[G.~Acosta]{Departamento  de Matem\'atica, FCEyN, Universidad de Buenos Aires,
 Pabell\'on I, Ciudad Universitaria (1428),
Buenos Aires, Argentina.}
\email{gacosta@dm.uba.ar}

\author[F. Bersetche]{Francisco M. Bersetche}
\address[F.M.~Bersetche]{Departamento de Matem\'atica, Universidad T\'ecnica Federico Santa Mar\'ia, Av. España
1680, Valpara\'iso, Chile.}
\email{francisco.mastrobert@usm.cl}

\author[J. D. Rossi]{Julio D. Rossi}
\address[J.D.~Rossi]{Departamento  de Matem\'atica, FCEyN, Universidad de Buenos Aires,
 Pabell\'on I, Ciudad Universitaria (1428),
Buenos Aires, Argentina.}
\email{jrossi@dm.uba.ar}

\date{}
\maketitle

\centerline{\it \small{Dedicated to Thomas Apel on the occasion of his 60th birthday.}}

\begin{abstract}
We study a natural alternating method of Schwarz type (domain decomposition) for certain class of   couplings between local and nonlocal operators. We show that our method fits into Lion's framework and prove, as a consequence, convergence in both, the continuous and the discrete settings.
\end{abstract}

\section{Introduction}
Introduced as a theoretical tool for showing existence and uniqueness of solutions for the Dirichlet problem, the celebrated Schwarz method \cite{Schwarz}, is a popular resource for dealing with general partial differential equations not only from the theoretical point of view but also as a computational tool at the discrete level. In his original work, Schwarz deals with the Laplace equation posed on a \emph{complex} or \emph{irregular}  domain that can be decomposed into  two simple overlapping subdomains. In this setting, he proposed an iterative algorithm based on solving, in an alternating way, a Laplace equation on each subdomain with an appropriate transfer of the Dirichlet boundary conditions between both problems. 
Schwarz showed, in a not completely rigorous way (see e.g. \cite{Gander}) that the proposed alternating method converges to the solution of the original problem. Over a century later, Lions introduced in \cite{Lions} an appropriate abstract framework for Schwarz's ideas. Although the main interest of Lions was computational, through the introduction of the parallel variant of the algorithm for the discretization of the original equations, his general treatment set rigorously the theory for many classical problems under general boundary conditions.

In the present paper we show that Lions's setting applies in a direct way for couplings between local and nonlocal equations such as those introduced in \cite{ABR}.
The kind of equations that we have in mind can be described \emph{informally} as follows: for a partition of a domain $\Omega\subset \mathbb{R}^N$, ${\Omega}= \left(\overline{\Omega_\ell} \cup \overline{\Omega}_{n\ell}\right)^\circ$, $ \Omega_\ell \cap \Omega_{n\ell} = \emptyset$ and a given source $f:\Omega \mapsto \mathbb{R}$ we seek for a function $u:\RR^n\to \RR$ such that 
$$
\begin{array}{l}
\displaystyle
-f(x) = \Delta u(x) +\cdots, \qquad x \in \Omega_\ell,
\\[7pt]
\displaystyle -f(x) = \int_{\Omega_{n\ell}} J(x-y) (u(y)- u(x))\, {\rm d}y + \cdots, \qquad x \in \Omega_{n\ell},
\end{array}
$$
with Dirichlet boundary conditions
(see below for a precise formulation). Notice that here we have an equation ruled by
a local operator in one part of the domain (the usual Laplacian in this case)  while in the other part the \emph{leading} term is given by a nonlocal operator. Nonlocal equations become
quite popular nowadays since they can be used to capture phenomena that 
local operators can not reproduce, see, for example, in phase transitions \cite{BCh,W} biology (population models) \cite{CF,Hutson,Strick}, elasticity \cite{DiPaola,MenDu} and neuronal networks, \cite{Z}. These models also create the need for developing new mathematical tools, we refer to \cite{ChChRo,CERW,Cortazar-Elgueta-Rossi-Wolanski} and the book \cite{ElLibro} and the survey \cite{F}. 
Previous references on coupling strategies between local and nonlocal operators appear
in connection with
atomistic-to-continuum models \cite{Peri1,Peri2,Peri3,Sel2,Sel3} (here the coupling is motivated by
microscopic descriptions of materials; different diffusions in different sets
\cite{Bere}, the existence of a prescribed region where the local and nonlocal problems
overlap \cite{delia2,delia3,delia}, couplings using different kernels, \cite{Du};
non-smooth interfaces, \cite{Gal,Kri}. Some of these coupling strategies impose continuity
of the solution. There are also numerical methods associated with these couplings, \cite{Han}.
For extra references we refer to the survey \cite{SUR}. 
The kind of coupling that we consider here is based on an energy formulation
that avoids the use of a transition region, but solutions to this kind of model
are not necessarily continuous. This kind of coupling strategy was proposed in \cite{ABR,GQR}
where the existence of minimizers of the energy was proved both for the scalar problem and for the vectorial case.

Our main goal is to adapt Lion's ideas
(iterating the solution operator for each part of the problem)
to solve this kind of problems.
From a theoretical point of view, this approach can be used to give an alternative proof for existence and uniqueness of solutions for the considered coupled models, however, notice that the original proof of existence and uniqueness is more direct (see \cite{ABR}). However, this idea of alternating the problems to be solved shield some light on the structure of the solution (in fact, one can look at the coupled local/nonlocal problem
as a system composed by a local equation and a nonlocal one both
with coupling terms, see Remark \ref{remark.pepe}). From a numerical perspective, it gives a formal proof of the following expected and informally stated result: if we have an abstract  algorithm $A1$  delivering solutions for the \emph{local} operator and an algorithm $A2$ doing the same for the  \emph{nonlocal} counterpart, they can be simultaneously used -as black boxes- to approximate solutions of coupled models.

In order to present our ideas we need to introduce some definitions and fix basic notations.

Consider two \emph{disjoint} open bounded sets $\Omega_\ell$ and $\Omega_{n\ell}$ representing respectively a local and a nonlocal region (in a sense clarified below) and define the set  $\Omega=\left(\bar \Omega_\ell \cup \bar\Omega_{n\ell}\right)^\circ.$  The unknowns in our problems are given by functions $u:\Omega\to \mathbb{R}$ with a prescribed zero value in $\Omega^c$ (i.e. an homogeneous Dirichlet case) that minimize some appropriate energy functionals. For the sake of clarity, we split functions $w$ defined in $\Omega$  in two parts that we call $u:\Omega_{\ell}\to \mathbb{R}$ and $v:\Omega_{n\ell}\to \mathbb{R}$, where
$u=w|_{\Omega_\ell}$ and $v=w|_{\Omega_{n\ell}}$. We will always assume that
the involved functions $w$ are extended to $\mathbb{R}^N \setminus \Omega$ by zero.  
Here we mainly focus on a single model  considered in \cite{ABR} (called $(i)$ in that paper and also here), since it exemplifies how our approach can be applied to more general cases. Loosely speaking,  this model deals with a  \emph{volumetric} coupling (i.e. the only interactions allowed between the local and the nonlocal regions are those between portions of positive $n-$dimensional measure) (see Figure \ref{fig:two_models}).
\begin{figure}
\begin{center}
    \includegraphics[scale=.4]{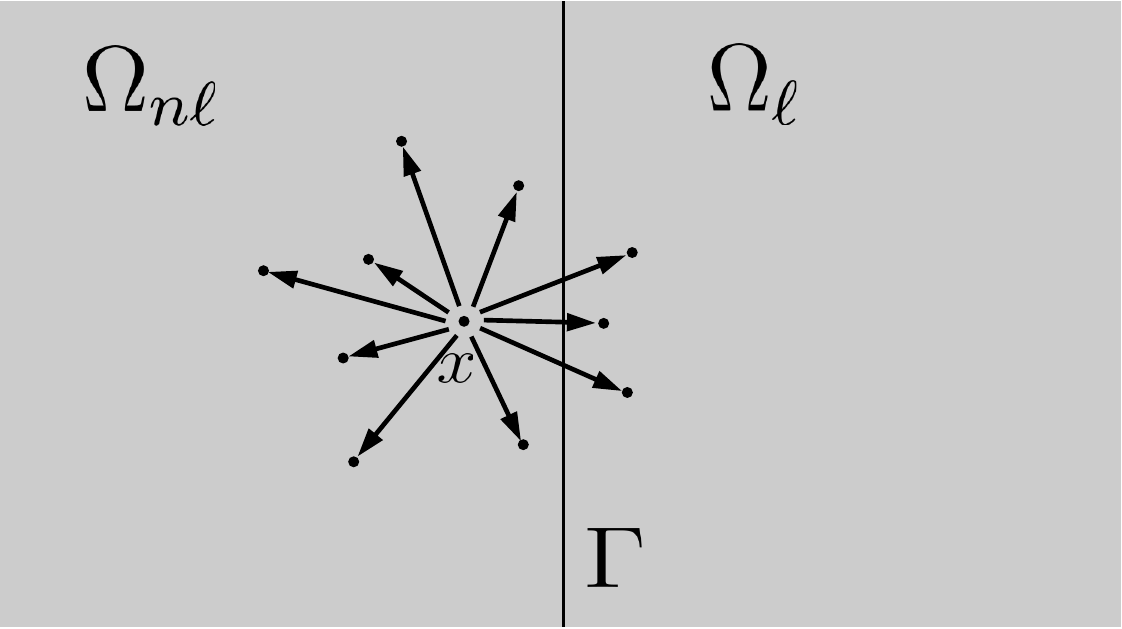}
    		\caption{{\emph{Volumetric} nonlocal interactions from $x \in \Omega_{n\ell}$. In this particular example the closures of $\Omega_{\ell}$ and $\Omega_{n\ell}$ touch each other on a common piece of boundary $\Gamma$.}}
\end{center}
\label{fig:two_models}
\end{figure}
Model $(i)$ is associated to the following energy (up to the notation, $E_i$ is the energy introduced in \cite{ABR}),
\begin{equation}\label{def:funcional_dirichlet.intro.2299}
\begin{array}{l}
\displaystyle
E_{i} (u,v):=\int_{\Omega_\ell} \frac{|\nabla u(x)|^2}{2} {\rm d}x + \frac{1}{2}\int_{\Omega_{n\ell}}\int_{\Omega_{n\ell}} J(x-y)(v(y)-v(x))^2\, {\rm d}y{\rm d}x
\\[10pt]
\qquad \qquad \displaystyle + \frac{1}{2}\int_{\Omega_{n\ell}}\int_{\Omega_{\ell}} J(x-y)(u(y)-v(x))^2\, {\rm d}y{\rm d}x
+\frac{1}{2}\int_{\Omega_{n\ell}}\int_{\Omega^c} J(x-y)v(x)^2\, {\rm d}y{\rm d}x \\[10pt] \displaystyle \qquad \qquad
- \int_\Omega f(x) u(x) {\rm d}x,
\end{array}
\end{equation}
where the  involved kernel $J:\RR^N \mapsto \RR$,  encodes the effect of a general \emph{volumetric} nonlocal  interaction inside the nonlocal part of the domain as well as between the nonlocal part with the local region (and the whole $\mathbb{R}^N$). This kernel is a nonnegative, symmetric ($J(z)=J(-z)$), bounded and integrable function with the following properties:
\begin{itemize}
  \item[(J1)]\emph{Visibility:} there exist $\delta>0$
and $C>0$ such that $J(z)>C$ for all $z$ such that $\|z\|\le 2\delta$.
\item[(J2)] \emph{Compactness:} the convolution type operator
$C(u) (x)=\int_{\Omega_{n\ell}} J(x-y)u (y) \, dy$, defines a compact operator in $L^2(\Omega_{n\ell})$.
 \end{itemize}
In our context, condition $(J1)$ guarantees the influence of nonlocality within an horizon of size at least $2\delta$ while $(J2)$ is a technical requirement fulfilled, for  instance, by continuous
kernels, characteristic functions, or even for $L^2$ kernels, (this holds since these kernels produce Hilbert-Schmidt operators of the form
$f \mapsto C(u)(x):=\int_A k(x,y) u(y) {\rm d}y$ that are compact if $k\in L^2(A\times A)$, see Chapter VI in \cite{brezis}).

Notice that the energy $E_i$ is well defined and finite for functions in the space
$$
L = \Big\{ (u,v)\in H^1 (\Omega_\ell)\times  L^2 (\Omega_{n\ell}):   u|_{\partial \Omega\cap \partial \Omega_\ell}=0\,  \Big\},
$$
where we have incorporated the Dirichlet homogeneous condition.
It is worthwhile to remark that the boundedness hypothesis that we assumed on the kernel $J$ 
says that the previously defined space $L$ coincide precisely with the space of functions with finite energy. Therefore, $L$ is the natural space to look for minimizers of the functional.

A result of existence and uniqueness of minimizers of $E_{i}$ is stated below after introducing a needed concept of connectivity.

 \begin{definition}\label{def:deltaconnected}
 {\rm We say that an open set $D \subset \mathbb{R}^N$ is $\delta-$connected , with $\delta\ge 0$, if it can not be written as a disjoint union of two
 (relatively) open nontrivial sets
 that are at distance greater or equal than $\delta.$}
\end{definition}

 Notice that if $D$ is $\delta$ connected then it is $\delta'$ connected for any $\delta'\ge \delta$. From Definition \ref{def:deltaconnected},
we notice that $0-$connectedness agrees with the classical notion of being connected (in particular, open connected sets are $\delta-$connected). Definition
\ref{def:deltaconnected} can be written in an equivalent way: an open set $D$ is $\delta-$connected if given
  two points $x,y\in D$,  there exists a finite number of points $x_0,x_1,...,x_n \in D$
 such that $x_0=x$,
 $x_n=y$ and $dist(x_i,x_{i+1}) <\delta$.

 Loosely speaking $\delta$-connectedness combined with $(J1)$ says that the effect of nonlocality can travel beyond the horizon $2\delta$ through the whole domain. For the rest of this paper we assume
 \begin{enumerate}
 \item[(1)] $ \Omega_\ell$ is connected and has Lipschitz boundary, 
 \item[(2)] $ \Omega_{n\ell} $ is $\delta-$connected.
 \end{enumerate}

Finally, in order to avoid trivial couplings in any of the two cases, we assume that  $\Omega_{\ell}$ and $\Omega_{n\ell}$ are close enough (closer than the horizon of the involved kernel)
\begin{enumerate}
\item[$(P1)$]  $dist ( \Omega_{\ell},\Omega_{n\ell})<\delta$.
\end{enumerate}

The following result is proved in \cite{ABR}.

{\bf Theorem} \label{teo.1.22}
Assume $(J1)$, $(J2)$, $(1)$, $(2)$ and $(P1)$.
Given $f \in L^2 (\Omega)$
there exists a unique minimizer $(u,v)$ of  $E_{i}$ in $L$.
The unique minimizer is a weak solution to the equation
\begin{equation}  \label{eq:main.Dirichlet.local.2277}
\begin{cases}
\displaystyle - f(x)=\Delta u (x) + \int_{\Omega_{n\ell}} J(x-y)(v(y)-u(x))\,{\rm d} y, &x\in \Omega_\ell,
\\[7pt]
\displaystyle \partial_\eta u(x)=0,\qquad & x\in  \partial \Omega_\ell \cap \Omega,  \\[7pt]
u(x)= 0, & x \in \partial \Omega \cap \partial \Omega_\ell,
\end{cases}
\end{equation}
 in $\Omega_{\ell}$ and to the following  nonlocal equation in $\Omega_{n\ell}$, with the nonlocal Dirichlet exterior condition
\begin{equation}  \label{eq:main.Dirichlet.nonlocal.2277}
\left\{
\begin{array}{l}
\displaystyle \!\! - f(x) = \!\! \int_{\mathbb{R}^N\setminus \Omega_{n\ell}} J(x-y)(\! u(y)-v(x)\! ) {\rm d}y
+ 2 \! \int_{\Omega_{n\ell}} J(x-y)  (\! v(y)-v(x) \!){\rm d}y, \\[7pt]
\qquad\qquad\qquad\qquad\qquad\qquad\qquad\qquad\qquad\qquad \qquad\qquad\qquad
\, x \in \Omega_{n\ell}, \\[7pt]
\displaystyle \!\! u(x) = 0,\qquad  x\in \mathbb{R}^N\setminus \Omega.
\end{array} \right.
\end{equation}

Notice that the subdifferential of the energy is given by 
$$
\begin{array}{l}
\displaystyle
\partial E_{i} (u,v) (\varphi, \psi):=\int_{\Omega_\ell} \nabla u(x) \nabla \varphi (x){\rm d}x + \int_{\Omega_{n\ell}}\int_{\Omega_{n\ell}} J(x-y)(v(y)-v(x))(\psi(y)-\psi(x))\, {\rm d}y{\rm d}x
\\[10pt]
\qquad \qquad \displaystyle + \int_{\Omega_{n\ell}}\int_{\Omega_{\ell}} J(x-y)(u(y)-v(x))(\varphi(y)-\psi(x))\, {\rm d}y{\rm d}x
+\int_{\Omega_{n\ell}}\int_{\Omega^c} J(x-y)v(x)\psi (x) \, {\rm d}y{\rm d}x \\[10pt] \displaystyle \qquad \qquad
- \int_{\Omega_{\ell}} f(x) \varphi (x) {\rm d}x - \int_{\Omega_{n\ell}} f(x) \psi (x) {\rm d}x,
\end{array}
$$
that is the weak form of the equations described in the previous result. Remark that
an homogeneous Neumann boundary condition for $u$ appears on the part of the boundary of $\Omega_\ell$ that is inside $\Omega$. In fact, the term
$\int_{\Omega_\ell} \nabla u(x) \nabla \varphi (x){\rm d}x$ is the weak form of $\Delta u (x)$
with mixed conditions, $u=0$ on $\partial \Omega_\ell \cap \partial \Omega$ and  
$\partial_\eta u =0$ on $\partial \Omega_\ell \cap \Omega$.

Now, with a fixed source $f$, we consider the operator that, given $v$, solves the local equation \eqref{eq:main.Dirichlet.local.2277}
and we call it $
L_{\ell}^f (v) = u$. We also introduce the operator that, given $u$, solves 
the nonlocal part of the problem \eqref{eq:main.Dirichlet.nonlocal.2277}
that we denote by $
L_{n\ell}^f (u) = v$. With these solutions
we define, for a given
$u_0$,  the alternating method $(M)$ given by the sequence
$$(M) \, \begin{cases}
v_{n+1} &= L_{n \ell}^f (u_{n}) \\
u_{n+1} &= L_{\ell}^f (v_{n+1}).
\end{cases}
$$
That is, we solve, iteratively, the local part of the equation
(fixing the nonlocal component) and the nonlocal equation
(fixing the local component).

Our main results can be summarized as follows:

\begin{theorem} \label{teo.1}
The sequence given by $(M)$ converges geometrically to the solution to
\eqref{eq:main.Dirichlet.local.2277}--\eqref{eq:main.Dirichlet.nonlocal.2277}.

Moreover, if $u_0$ is the local part of a subsolution to
\eqref{eq:main.Dirichlet.local.2277}--\eqref{eq:main.Dirichlet.nonlocal.2277}
then the sequence is monotone increasing, in the sense that $v_{n+1} \geq v_n$ and
$u_{n+1} \geq u_n$.

In addition, appropriate finite element discretizations of 
$L_{\ell}^f$ and $L_{n\ell}^f$ provide converging sequences  (obtained by alternating or parallel iterations)  to the continuous solution to
\eqref{eq:main.Dirichlet.local.2277}--\eqref{eq:main.Dirichlet.nonlocal.2277}.
\end{theorem}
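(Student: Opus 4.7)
My plan is to recast the iteration $(M)$ as Lions's alternating-projection scheme in the Hilbert space obtained by equipping $L$ with the continuous symmetric bilinear form $a$ read off the quadratic part of $E_i$. The one substantive analytic step is to show that $a$ is coercive on $L$, so that $(L,a)$ becomes a Hilbert space whose norm is equivalent to the natural $H^1(\Omega_\ell)\times L^2(\Omega_{n\ell})$ norm; this is the coupled Poincar\'e-type inequality already implicit in the existence/uniqueness result of \cite{ABR}, and it is the only place where the hypotheses $(J1),(J2),(1),(2),(P1)$ really enter. With $(L,a)$ a Hilbert space I introduce the closed subspaces
\[
W_\ell=\{(\varphi,0)\in L\}\qquad\text{and}\qquad W_{n\ell}=\{(0,\psi)\in L\},
\]
which give a topological direct sum $L=W_\ell\oplus W_{n\ell}$.

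The Euler--Lagrange characterizations of $v_{n+1}=L_{n\ell}^f(u_n)$ and $u_{n+1}=L_\ell^f(v_{n+1})$ amount, after subtraction of the same identities for the true solution $(u^*,v^*)$, to $(u_n-u^*,v_{n+1}-v^*)$ being $a$-orthogonal to $W_{n\ell}$ and $(u_{n+1}-u^*,v_{n+1}-v^*)$ being $a$-orthogonal to $W_\ell$. A short manipulation then shows that the error $e_n=(u_n-u^*,v_n-v^*)$ evolves as
\[
e_{n+1}=(I-\Pi_\ell)(I-\Pi_{n\ell})\,e_n,
\]
with $\Pi_\ell,\Pi_{n\ell}$ the $a$-orthogonal projections onto $W_\ell,W_{n\ell}$. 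Since $W_\ell+W_{n\ell}=L$ (trivially closed) and $W_\ell\cap W_{n\ell}=\{0\}$, the Friedrichs angle between $W_\ell$ and $W_{n\ell}$ is strictly positive, and the Lions/Deutsch estimate gives $\|(I-\Pi_\ell)(I-\Pi_{n\ell})\|_a=\kappa<1$, which is precisely the geometric convergence statement.

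For the monotonicity claim I would first establish comparison principles for each subproblem separately: for the mixed Dirichlet--Neumann Laplacian on $\Omega_\ell$ by testing with the negative part of the difference of two solutions, and for the nonlocal exterior-Dirichlet problem on $\Omega_{n\ell}$ by an analogous test combined with the positivity of $J$. Both yield that $L_\ell^f$ and $L_{n\ell}^f$ are monotone nondecreasing in their data. If $u_0$ is the local part of a subsolution $(u_0,v_0)$, comparison applied to the nonlocal subproblem with data $u_0$ gives $v_1=L_{n\ell}^f(u_0)\ge v_0$, and monotonicity of $L_\ell^f$ then yields $u_1\ge u_0$; induction produces the monotone increase. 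The same comparison bounds the sequence above by $(u^*,v^*)$, so it converges, and by the first part of the theorem its limit must be $(u^*,v^*)$.

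At the discrete level I take conforming subspaces $L_h=W_{\ell,h}\oplus W_{n\ell,h}\subset L$ on which the restriction of $a$ is still an inner product; the abstract argument above then gives geometric convergence of the discrete alternating iteration to the Galerkin solution $(u_h^*,v_h^*)$. C\'ea's lemma and standard $H^1,L^2$ approximation yield $(u_h^*,v_h^*)\to(u^*,v^*)$ in the $a$-norm as $h\to 0$, so a diagonal procedure in $(h,n)$ produces discrete sequences converging to the continuous solution. The parallel variant corresponds to block-Jacobi iteration on the symmetric positive-definite $2\times 2$ block system encoded by $a$ and fits the parallel-Schwarz part of \cite{Lions}, where convergence follows from the same Friedrichs-angle bound, possibly after a standard relaxation. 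The main technical hurdle I anticipate is the coercivity of $a$ on $L$, i.e.~the coupled Poincar\'e-type inequality; once that cornerstone is secured, the rest is a formal application of Lions's Hilbert-space framework.
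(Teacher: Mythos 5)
Your proposal is correct and sits inside the same framework as the paper (Lions's alternating projections with respect to the inner product coming from the quadratic part of $E_i$, whose coercivity/norm equivalence is indeed the crux and is the paper's Lemma \ref{lemma:equivH} via Lemma 3.3 of \cite{ABR}; your Euler--Lagrange identification of the half-steps as $a$-orthogonal projections is exactly Lemma \ref{proyection-i}, since the paper's $V_1,V_2$ are precisely the $a$-orthogonal complements of your $W_\ell,W_{n\ell}$). Where you genuinely diverge is in how the contraction constant is obtained. The paper proves the stronger statement $H=V_1\oplus V_2$ (Lemma \ref{lema-suma-directa}) by a Fredholm-alternative argument using compactness of $K=L_{n\ell}\circ L_\ell$, and then invokes Lions's theorem with $V_1+V_2=H$ closed; this decomposition also doubles as an alternative existence proof for the coupled system. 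You instead exploit the trivial decomposition $H=W_\ell\oplus W_{n\ell}$ and transfer it to the graphs by duality: $V_1\cap V_2=(W_\ell+W_{n\ell})^{\perp_a}=\{0\}$ and, since the Friedrichs angle is invariant under passing to orthogonal complements (equivalently, $M+N$ is closed iff $M^{\perp}+N^{\perp}$ is), $\|P_{V_1}P_{V_2}\|_a=c(V_1,V_2)=c(W_\ell,W_{n\ell})<1$. This is softer (no compactness/Fredholm step is needed there, though $(J2)$ still enters through the Poincar\'e/norm-equivalence), but it presupposes the existence of $(u^*,v^*)$, which the paper in any case imports from \cite{ABR}; you should also make explicit the two classical facts you are using (Dixmier angle equals Friedrichs angle when the intersection is trivial, and $c(M,N)=c(M^{\perp},N^{\perp})$, e.g.\ from Deutsch or Kato), since they carry the whole geometric-rate claim. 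The monotonicity argument (comparison principles for each half-problem plus monotone dependence on the data, then induction, with the solution as an upper barrier) and the discrete part (conforming Galerkin spaces, the same projection argument, C\'ea plus approximation, and the observation that in the two-subdomain case the parallel iterates interlace the alternating ones, so no relaxation is actually needed) coincide with the paper's treatment.
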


The paper is organized as follows: In Section \ref{sect-alternating} we
study the alternating method in its continuous version and in Section \ref{sect-numerical}
we deal with the numerical approximations and some numerical experiments.

\section{Alternating method for $(i)$} \label{sect-alternating}

For a \emph{given} $v \in L^2 (\Omega_{n\ell})$ we define the operator
$$
L_{\ell}^f (v) = u
$$
with $u$ the solution to \eqref{eq:main.Dirichlet.local.2277}
in the local domain $\Omega_\ell$ (observe that we use $L_{\ell}^f $
we are \emph{not} claiming that $(u
,v)$ is a solution for the coupled system
\eqref{eq:main.Dirichlet.local.2277}, \eqref{eq:main.Dirichlet.nonlocal.2277}, but only that
$u$ the solution to \eqref{eq:main.Dirichlet.local.2277} for that particular $v$). In the case $f=0$ we omit the superscript and just write $L_\ell(u)$. Here we need to introduce the space
$$
H_{\partial \Omega \cap \partial \Omega_\ell}^1(\Omega_{\ell}) = \left\{
u \in H^{1} (\Omega_{\ell}) : u|_{\partial \Omega \cap \partial \Omega_\ell}=0 \right\},
$$
that is, functions in $H_{\partial \Omega \cap \partial \Omega_\ell}^1(\Omega_{\ell}) $
are functions in  $H^{1} (\Omega_{\ell})$ that vanish (in the sense of traces) on $
\partial \Omega \cap \partial \Omega_\ell$.
Notice that $L_{\ell}^f (v)=u \in H_{\partial \Omega \cap \partial \Omega_\ell}^1(\Omega_{\ell})$, can also be obtained as a minimizer of 
$$
\begin{array}{l}
\displaystyle 
E_\ell (u) = \int_{\Omega_\ell}\frac{ |\nabla u|^2 (x)}{2} {\rm d} x
+ \int_{\Omega_{\ell}} u(x)^2 \int_{\Omega_{n\ell}} J(x-y) {\rm d} y {\rm d} x
\\[10pt]
\qquad \qquad \displaystyle - \int_{\Omega_\ell} f (x) u (x) {\rm d} x - \int_{\Omega_{\ell}} u(x)  \int_{\Omega_{n\ell}} J(x-y) v(y) {\rm d} y  {\rm d} x.
\end{array}
$$

Our first lemma says that this operator $L_{\ell}^f $ is well defined and bounded. 

\begin{lemma} \label{lema.est.u}
The operator $L_{\ell}^f:L^2(\Omega_{n\ell})\to H_{\partial \Omega \cap \partial \Omega_\ell}^1(\Omega_{\ell})$ is well defined and we have
$$
\|u\|_{H^1(\Omega_\ell)}\le C \left( \|f\|_{L^2(\Omega_\ell)}+ \|v\|_{L^2(\Omega_{n\ell})}\right).
$$
In particular $L_{\ell}$ is a bounded linear operator
$$
\|L_{\ell}(v)\|_{H^1(\Omega_\ell)}\le C \|v\|_{L^2(\Omega_{n\ell})}.
$$
\end{lemma}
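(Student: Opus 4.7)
The Euler--Lagrange equation for the quadratic functional $E_\ell$ is precisely the weak form of \eqref{eq:main.Dirichlet.local.2277}, so the plan is to recognize the problem as a standard linear elliptic boundary value problem on the Hilbert space $H := H^1_{\partial\Omega\cap\partial\Omega_\ell}(\Omega_\ell)$ and to apply Lax--Milgram. Concretely, one rewrites the problem as $a(u,\varphi) = \Lambda(\varphi)$ for every $\varphi\in H$, where
\begin{align*}
a(u,\varphi) &= \int_{\Omega_\ell}\nabla u\cdot\nabla\varphi\,dx + \int_{\Omega_\ell} K(x)\,u(x)\varphi(x)\,dx,\\
\Lambda(\varphi) &= \int_{\Omega_\ell} f(x)\varphi(x)\,dx + \int_{\Omega_\ell}\varphi(x)\int_{\Omega_{n\ell}}J(x-y)v(y)\,dy\,dx,
\end{align*}
with $K(x):=\int_{\Omega_{n\ell}}J(x-y)\,dy\in L^\infty(\Omega_\ell)$.

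\textbf{Key verifications.} Continuity of $a$ and $\Lambda$ on $H$ is immediate from $J\in L^\infty$, Cauchy--Schwarz, and the boundedness of $\Omega_\ell$ and $\Omega_{n\ell}$; in particular the cross term in $\Lambda$ is bounded by $\|J\|_\infty|\Omega_{n\ell}|^{1/2}|\Omega_\ell|^{1/2}\|v\|_{L^2(\Omega_{n\ell})}\|\varphi\|_{L^2(\Omega_\ell)}$. For coercivity I would use that $\Omega_\ell$ is connected and Lipschitz and that the Dirichlet portion $\partial\Omega\cap\partial\Omega_\ell$ carries positive $(N-1)$-dimensional measure (the tacit hypothesis of the setup, illustrated in the figure), so that the Poincar\'e--Friedrichs inequality gives $\|u\|_{L^2(\Omega_\ell)}\leq C_P\|\nabla u\|_{L^2(\Omega_\ell)}$ on $H$, whence $a(u,u)\geq\|\nabla u\|_{L^2}^2\geq c\|u\|_H^2$. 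Lax--Milgram then delivers a unique $u\in H$.

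\textbf{The a priori bound.} Testing the weak formulation with $\varphi=u$ and using the continuity estimate for $\Lambda$ yields
\[
\|\nabla u\|_{L^2}^2\leq a(u,u)=\Lambda(u)\leq \|f\|_{L^2(\Omega_\ell)}\|u\|_{L^2(\Omega_\ell)}+C\|v\|_{L^2(\Omega_{n\ell})}\|u\|_{L^2(\Omega_\ell)},
\]
and one concludes via Poincar\'e and Young's inequality, absorbing half of $\|\nabla u\|_{L^2}^2$ from the right-hand side. The bound for $L_\ell$ is then the specialization $f\equiv 0$, and linearity of $L_\ell$ in $v$ is immediate from the linear dependence of $\Lambda$ on the data. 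The only mildly nontrivial ingredient is locating a Poincar\'e inequality on $H$; this is standard once one grants the positive-measure Dirichlet assumption, so I do not expect a genuine obstacle --- and if that boundary measure were to vanish, the zero-order term $\int K u^2$ in $a(u,u)$ combined with $(J1)$ and $(P1)$ (which force $K$ to be bounded below on a set of positive measure in $\Omega_\ell$) would still rescue coercivity.
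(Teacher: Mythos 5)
Your proof is correct and follows essentially the same route as the paper: there, $u$ is obtained as the unique minimizer of $E_\ell$ by the direct method of the calculus of variations (equivalent to your Lax--Milgram argument for the symmetric coercive form $a$), and the a priori bound is derived exactly as you do, by testing the equation with $u$ and using the nonnegativity of $K(x)=\int_{\Omega_{n\ell}}J(x-y)\,{\rm d}y$ together with the $L^2$-boundedness of $v\mapsto\int_{\Omega_{n\ell}}J(\cdot-y)v(y)\,{\rm d}y$. If anything, you are more explicit than the paper about the coercivity/Poincar\'e step, including the fallback through the zero-order term $\int K u^2$ (via $(J1)$ and $(P1)$) when $\partial\Omega\cap\partial\Omega_\ell$ carries no surface measure, a point the paper leaves implicit.
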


\begin{proof}
The energy functional $E_\ell(u)$ has a unique minimizer in $H^1_{\partial \Omega_{\ell} \cap \partial \Omega}$ (that can be obtained by the direct method of calculus of variations) that corresponds to the unique weak solution of
$$
- f(x)+ \int_{\Omega_{n\ell}} J(x-y)v(y)\,{\rm d} y=\Delta u (x) - u(x) \int_{\Omega_{n\ell}} J(x-y)\,{\rm d} y.
$$

Now, multiplying both sides of this identity by $u$, integrating in $\Omega_{\ell}$, using that the continuous function $\int_{\Omega_{n\ell}} J(x-y)\,{\rm d} y$ is nonnegative and that $\int_{\Omega_{n\ell}} J(x-y)v(y)\,{\rm d} y$ defines a continuous operator in $L^2$ we get
$$
\|u\|_{H^1(\Omega_\ell)}\le C \left( \|f\|_{L^2(\Omega_\ell)}+ \|v\|_{L^2(\Omega_{n\ell})}\right). %
$$
\end{proof}

Now, \emph{given} $u \in H^1 (\Omega_\ell)$, we let 
$$
L_{n\ell}^f (u) = v
$$
where $v\in L^2 (\Omega_\ell)$ is the solution to \eqref{eq:main.Dirichlet.nonlocal.2277} (again we are \emph{not} claiming that $(u
,v)$ is a solution for the coupled system
\eqref{eq:main.Dirichlet.local.2277}, \eqref{eq:main.Dirichlet.nonlocal.2277}). As before, we reserve the notation $L_{n\ell}$ for the case $f=0$.
Notice that $v$ can be obtained minimizing the functional
$$
\begin{array}{l}
\displaystyle
E_{n\ell} (v) = \frac12 \int_{\Omega_{n\ell}} \int_{\Omega_{n\ell}}\!\! J(x-y)(v(y)-v(x))^2\, {\rm d} y  \, {\rm d} x
+ \int_{\Omega_{n\ell}} \int_{\Omega_\ell} J(x-y) |v(x)|^2  {\rm d} x \\[10pt]
\qquad \qquad \displaystyle
- \int_{\Omega_{n\ell}} f(x) v(x) {\rm d} x -
 \int_{\Omega_{n\ell}} v(x) \int_{\Omega_{\ell}} J(x-y) u(y) {\rm d} y  {\rm d} x.
 \end{array}
 $$
 
 In this context we have Poincare type inequalities.
 
\begin{lemma} \label{lemma:poincare}
Assume $(J1)$, $(J2)$, $(2)$ and $(P1)$. Then, for any $v\in L^2(\Omega_{n\ell})$ 
such that $\bar v=\int_{\Omega_{n\ell}} v=0$ we have
$$
\|v\|_{L^2(\Omega_{n\ell})}^2\le C \int_{\Omega_{n\ell}}\int_{\Omega_{n\ell}} J(x-y)(\! v(y)-v(x)\! )^2 {\rm d}y {\rm d}x
$$
for a constant $C=C(\Omega_{n\ell})$.

In addition, for any continuous function $h(x)$,  $h(x)>0$ in a set $B$ of positive n-dimensional measure $B\subset \Omega_{n\ell}$ and any $v\in L^2(\Omega_{n\ell})$ it holds that
$$
\|v\|_{L^2(\Omega_{n\ell})}^2\le C \left( \int_{\Omega_{n\ell}}h(x)v^2(x)+ \int_{\Omega_{n\ell}}\int_{\Omega_{n\ell}} J(x-y)(\! v(y)-v(x)\! )^2 {\rm d}y {\rm d}x\right)
$$
for a constant $C=C(\Omega_{n\ell},B,h)$.
\end{lemma}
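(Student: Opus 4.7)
The plan is to prove both inequalities by contradiction. Compactness of the convolution-type operator $T(v)(x):=\int_{\Omega_{n\ell}} J(x-y)\,v(y)\,dy$ provided by $(J2)$ will be used to upgrade weak $L^2$-limits to strong ones, while $(J1)$ combined with the $\delta$-connectedness of $\Omega_{n\ell}$ plays the role of a rigidity statement: any $v^\star$ satisfying $Q(v^\star):=\int\!\int J(x-y)(v^\star(y)-v^\star(x))^2\,dy\,dx=0$ is forced to be a.e.\ constant on $\Omega_{n\ell}$.

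For the first inequality I would negate it and take a sequence $\{v_n\}\subset L^2(\Omega_{n\ell})$ with $\int v_n=0$, $\|v_n\|_{L^2}=1$ and $Q(v_n)\to 0$. Extracting a subsequence, $v_n \rightharpoonup v^\star$ weakly in $L^2$. Since $Q$ is the squared norm of the bounded linear map $v\mapsto \sqrt{J(x-y)}\,(v(y)-v(x))$ from $L^2(\Omega_{n\ell})$ to $L^2(\Omega_{n\ell}\times\Omega_{n\ell})$, it is weakly lower semicontinuous, so $Q(v^\star)=0$. Hypothesis $(J1)$ then gives $v^\star(x)=v^\star(y)$ for a.e.\ pair with $\|x-y\|\le 2\delta$, and the chain characterization in Definition \ref{def:deltaconnected} forces $v^\star$ to be a.e.\ constant; the vanishing-mean condition (preserved under weak convergence) yields $v^\star=0$. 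To reach a contradiction with $\|v_n\|_{L^2}=1$, expand
\[
Q(v_n) \;=\; 2\int_{\Omega_{n\ell}} M(x)\,v_n(x)^2\,dx \;-\; 2\int_{\Omega_{n\ell}} v_n(x)\,(Tv_n)(x)\,dx, \qquad M(x):=\int_{\Omega_{n\ell}} J(x-y)\,dy.
\]
By $(J2)$, $Tv_n\to 0$ strongly in $L^2$, so $\int v_n\,Tv_n\to 0$ and hence $\int M v_n^2\to 0$. Since $M$ is continuous and bounded below by a positive constant on $\overline{\Omega_{n\ell}}$ (a consequence of $(J1)$, which gives $M(x)\ge C\,|B_{2\delta}(x)\cap\Omega_{n\ell}|$), this forces $\|v_n\|_{L^2}\to 0$, the desired contradiction.

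For the second inequality I would reduce to Part~1. Setting $\bar v:=|\Omega_{n\ell}|^{-1}\int v$, Part~1 applied to $v-\bar v$, together with the translation invariance $Q(v-\bar v)=Q(v)$, yields $\|v-\bar v\|_{L^2}^2\le C_1\,Q(v)$. It only remains to estimate $\bar v^{\,2}$. Expanding
\[
\int h\,v^2 \;=\; \bar v^{\,2}\!\int h \;+\; 2\bar v\!\int h\,(v-\bar v) \;+\; \int h\,(v-\bar v)^2,
\]
applying Cauchy--Schwarz and Young's inequality to the cross term, and absorbing the last term via the bound just obtained, one gets $\bar v^{\,2}\int h\le 2\int h\,v^2+C''\,Q(v)$. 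Since $h$ is continuous with $h>0$ on $B$ and $|B|>0$, the open set $\{h>0\}$ contains $B$ and so $\int h>0$; combining the two bounds delivers the claim.

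The main obstacle is the upgrade from weak to strong $L^2$-convergence. Because the kernel $J$ is bounded, the quadratic form $Q$ by itself provides no Rellich-type compactness on $L^2(\Omega_{n\ell})$; this step therefore relies crucially on the compactness assumption $(J2)$ through $T$, while the visibility hypothesis $(J1)$ plays the complementary role of keeping the diagonal multiplier $M$ quantitatively away from zero, so that $\int M v_n^2\to 0$ is enough to conclude $\|v_n\|_{L^2}\to 0$.
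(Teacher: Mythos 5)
Your proof is correct, and it is organized differently from the paper's. The paper establishes the second inequality by a compactness contradiction (a normalized sequence $v_n$ with $\int_{\Omega_{n\ell}}h\,v_n^2\to 0$ and vanishing quadratic form), but it delegates the decisive step --- strong $L^2$ convergence of $v_n$ to a constant --- to results in \cite{ABR} and \cite{ElLibro}, and simply states that the mean-zero inequality follows by the same argument. You instead prove the mean-zero inequality first and make the compactness step self-contained: weak lower semicontinuity of $Q$, the $(J1)$-plus-$\delta$-connectedness rigidity and the preserved zero mean give $v^\star=0$; then the identity $Q(v_n)=2\int M v_n^2-2\int v_n\,Tv_n$, the compactness of $T$ from $(J2)$ (so $Tv_n\to 0$ strongly), and the lower bound $M\ge c_0>0$ force $\|v_n\|_{L^2}\to 0$, contradicting the normalization. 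The $h$-version is then deduced from the mean-zero version by the elementary decomposition $v=(v-\bar v)+\bar v$ with Cauchy--Schwarz/Young absorption, rather than by a second compactness argument. Your route buys independence from the external references and shows exactly where each hypothesis enters; the cost is only that two small points should be spelled out in a full write-up: the bound $M(x)\ge C\,|B_{2\delta}(x)\cap\Omega_{n\ell}|\ge c_0>0$ uses that $x\mapsto|B_{2\delta}(x)\cap\Omega_{n\ell}|$ is continuous and strictly positive on the compact set $\overline{\Omega_{n\ell}}$, and your conclusion $\int_{\Omega_{n\ell}}h>0$ (as well as discarding the term $-\int h(v-\bar v)^2$) tacitly uses $h\ge 0$ on $\Omega_{n\ell}$ --- an assumption that is equally implicit in the lemma's statement and in the paper's own proof, and is satisfied by the $h$ arising in Lemma \ref{lema.est.v}.
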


\begin{proof}
 Both inequalities can be obtained arguing as in \cite{ABR} (Section 2), see also \cite{ElLibro} (Chapter 3).
 For example, for the second inequality we can proceed as follows: assume that the inequality
 does not hold. Then we have a sequence $v_n$ such that
 $$
\|v_n\|_{L^2(\Omega_{n\ell})}^2 = 1
$$
and 
$$
\int_{\Omega_{n\ell}}h(x)v_n^2(x) {\rm d}x \to 0  $$
$$
\int_{\Omega_{n\ell}}\int_{\Omega_{n\ell}} J(x-y)(\! v_n(y)-v_n(x)\! )^2 {\rm d}y {\rm d}x
\to 0.
$$
From the last convergences (using results from \cite{ElLibro}) we obtain that
$v_n$ converges strongly in $L^2(\Omega_{n\ell})$ to a limit $w$ that is a constant 
in $\Omega_{n\ell}$ (here we are suing that the domain in $\delta$--connected)
and must verify
$$
w^2 \left( \int_{\Omega_{n\ell}}h(x) {\rm d}x  \right) = 0 .
$$
This implies that $w \equiv 0$ and hence 
we arrive to a contradiction since we have
 $$
0= \|w\|_{L^2(\Omega_{n\ell})}^2= \lim_n \|v_n\|_{L^2(\Omega_{n\ell})}^2 = 1.
$$
 \end{proof}

 We also have an analogous result to Lemma \ref{lema.est.u}.

\begin{lemma} \label{lema.est.v}
For the operator $L_{n\ell}^f: H_{\partial \Omega \cap \partial \Omega_\ell}^1(\Omega_{\ell})\to L^2(\Omega_{n\ell})$ we have that is well defined with
$$
\|v\|_{L^2(\Omega_{n\ell})}\le C \left( \|f\|_{L^2(\Omega_\ell)}+ \|u\|_{H^1(\Omega_\ell)}\right).
$$
In particular $L_{n\ell}$ is a bounded linear operator
$$
\|L_{n \ell}(u)\|_{L^2(\Omega_{n\ell})}\le C \|u\|_{H^1(\Omega_\ell)}.
$$
\end{lemma}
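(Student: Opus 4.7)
My plan is to mirror the argument for Lemma~\ref{lema.est.u} (the local case), replacing the standard Poincar\'e inequality by the weighted variant in the second part of Lemma~\ref{lemma:poincare}. First I would establish that $v=L_{n\ell}^f(u)$ exists and is unique as the minimizer of the strictly convex, continuous functional $E_{n\ell}$ on $L^2(\Omega_{n\ell})$ via the direct method of the calculus of variations; strict convexity is automatic from the quadratic structure, so the only nonstandard ingredient is coercivity.

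The crux of the proof, and what I expect to be the only genuinely delicate point, is to verify coercivity. Set
$$h(x):=\int_{\Omega_\ell} J(x-y)\,{\rm d}y,$$
so that the second quadratic term of $E_{n\ell}$ can be rewritten as $\int_{\Omega_{n\ell}} h(x)v(x)^2\,{\rm d}x$. Hypothesis $(P1)$ furnishes a point $x_0\in\Omega_{n\ell}$ with ${\rm dist}(x_0,\Omega_\ell)<\delta$, and $(J1)$ then forces $J(x_0-y)>C>0$ on a portion of $\Omega_\ell$ of positive measure, so $h(x_0)>0$. Continuity of $h$ (convolution of $J\in L^1\cap L^\infty$ with the characteristic function of a bounded set) propagates this positivity to a whole open ball $B\subset\Omega_{n\ell}$. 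Lemma~\ref{lemma:poincare} therefore applies with weight $h$ and yields
$$\|v\|_{L^2(\Omega_{n\ell})}^2 \le C\Big(\int_{\Omega_{n\ell}}\!\int_{\Omega_{n\ell}}J(x-y)(v(y)-v(x))^2\,{\rm d}y\,{\rm d}x + \int_{\Omega_{n\ell}} h(x)\,v(x)^2\,{\rm d}x\Big),$$
which dominates the two linear terms of $E_{n\ell}$ via Cauchy--Schwarz and Young, producing coercivity.

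Finally, to establish the norm bound I would write the Euler--Lagrange identity for the minimizer and test it against $v$ itself. On the left this produces, up to constants, precisely the quadratic quantity controlled above; on the right it produces $\int_{\Omega_{n\ell}} fv + \int_{\Omega_{n\ell}} v(x)\int_{\Omega_\ell} J(x-y)u(y)\,{\rm d}y\,{\rm d}x$. Cauchy--Schwarz controls the first term by $\|f\|_{L^2(\Omega_\ell)}\|v\|_{L^2(\Omega_{n\ell})}$, and boundedness of $J$ controls the coupling term by $C\|u\|_{L^2(\Omega_\ell)}\|v\|_{L^2(\Omega_{n\ell})}$. Young's inequality absorbs $\varepsilon\|v\|_{L^2(\Omega_{n\ell})}^2$ into the left-hand side to yield
$$\|v\|_{L^2(\Omega_{n\ell})}^2 \le C\big(\|f\|_{L^2(\Omega_\ell)}^2 + \|u\|_{L^2(\Omega_\ell)}^2\big),$$
which is in fact slightly sharper than the stated estimate since $\|u\|_{L^2(\Omega_\ell)}\le\|u\|_{H^1(\Omega_\ell)}$. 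Setting $f\equiv 0$ and using the linearity of the Euler--Lagrange equation in the pair $(u,f)$ gives the final boundedness assertion for $L_{n\ell}$.
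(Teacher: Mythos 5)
Your argument is correct and follows essentially the same route as the paper: existence via minimization of $E_{n\ell}$, testing the equation with $v$, and invoking the weighted Poincar\'e inequality (second part of Lemma~\ref{lemma:poincare}) to absorb $\|v\|_{L^2(\Omega_{n\ell})}^2$. The only cosmetic difference is your choice of weight $h(x)=\int_{\Omega_\ell}J(x-y)\,{\rm d}y$ instead of the paper's $h(x)=\int_{\Omega_{n\ell}^c}J(x-y)\,{\rm d}y$ (yours is smaller, so its positivity on a set of positive measure, which you justify via $(P1)$ and $(J1)$, suffices just as well).
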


\begin{proof} As before, existence and uniqueness of $v$
the solution to \eqref{eq:main.Dirichlet.nonlocal.2277} can be obtained
minimizing the energy $E_{n\ell} $.

Multiplying  both sides of \eqref{eq:main.Dirichlet.nonlocal.2277} by $v(x)$  and integrating in $\Omega_{n\ell}$ yields
$$
\begin{array}{l}
\displaystyle \!\! -\int_{\Omega_{n\ell}} f(x)v(x) = \!\! \int_{\Omega_{n\ell}}v(x)\int_{\Omega_{n\ell}^c} J(x-y)(\! u(y)-v(x)\! ) {\rm d}y {\rm d}x
\\[10pt]
\qquad \qquad\qquad\qquad \displaystyle -  \! \int_{\Omega_{n\ell}} \int_{\Omega_{n\ell}} J(x-y)  (\! v(y)-v(x) \!)^2{\rm d}y {\rm d}y,
\end{array}$$
where in the last term we used the symmetry of the double integral given by the fact $J(z)=J(-z)$. Since
\begin{align*}
\displaystyle \int_{\Omega_{n\ell}}v(x)\int_{\Omega_{n\ell}^c} J(x-y)(\! u(y)-v(x)\! ) {\rm d}y {\rm d}x&= \int_{\Omega_{n\ell}}v(x)\int_{\Omega_{\ell}} J(x-y)\!\ u(y) {\rm d}y {\rm d}x \\
&- \int_{\Omega_{n\ell}}v(x)^2\int_{\Omega_{n\ell}^c} J(x-y) {\rm d}y\,  {\rm d}x
\end{align*}
in particular, calling $h(x)$ the continuous function $$0\le h(x)=\int_{\Omega_{n\ell}^c} J(x-y) {\rm d}y$$ 
that is finite since we assumed that $J$ is integrable, we obtain
\begin{align*}
\displaystyle \int_{\Omega_{n\ell}}v(x)^2h(x)\,  {\rm d}x &+ \! \int_{\Omega_{n\ell}} \int_{\Omega_{n\ell}} J(x-y)  (\! v(y)-v(x) \!)^2{\rm d}y {\rm d}y \\
=& \int_{\Omega_{n\ell}} f(x)v(x) + \int_{\Omega_{n\ell}}v(x)\int_{\Omega_{\ell}} J(x-y)\!\ u(y) {\rm d}y {\rm d}x \\
\le & \|v\|_{L^2(\Omega_{n\ell})}\left(\|f\|_{L^2(\Omega_{n\ell})}+ \|u\|_{L^2(\Omega_{\ell})} \right).
\end{align*}
Since $h(x)>0$ in a set $B$ of positive n-dimensional measure $B\subset \Omega_{n\ell}$ we can use Lemma \ref{lemma:poincare} and obtain
$$
\|v\|_{L^2(\Omega_{n\ell})}\le C \left(\|f\|_{L^2(\Omega_{n\ell})}+ \|u\|_{L^2(\Omega_{\ell})} \right).
$$
\end{proof}

\begin{remark}
Notice that it is possible to give two different proofs of Lemma \ref{lema.est.u} (resp. Lemma \ref{lema.est.v}); one showing existence and uniqueness of minimizers for $E_\ell$ (resp. $E_{n\ell}$) and another dealing directly with the equations using comparison arguments with the maximum principle
(using sub and supersolutions to prove existence of a solution). 
This gives two different approaches for the alternating method introduced below 
(minimizing vs. solving equations). In our numerical schemes we will solve 
the resulting equations. 
\end{remark}

With these two operators we define, for a given
$u_0$, the alternating method $(M)$ given by the sequence
$$(M) \, \begin{cases}
v_{n+1} &= L_{n \ell}^f (u_{n}) \\
u_{n+1} &= L_{\ell}^f (v_{n+1}).
\end{cases}
$$
That is, we start with $u_0$, a given function defined in the local region, then we solve our problem in the nonlocal region to obtain
$v_{1} = L_{n \ell}^f (u_{0})$ and then we solve in the local region to obtain 
$u_{1} = L_{\ell}^f (v_{1})$ and then we iterate this procedure. 

\begin{remark}\label{rem:f=0}
Convergence of $(M)$ to the solution $(u,v)$ of our problem with a general $f$, is equivalent to prove convergence to zero for the case $f=0$. Indeed, renaming $u_n=u_n-u$ and $v_n=v_n-v$ we see that they must verify
$$
v_{n+1} = L_{n \ell} (u_{n})
$$
$$
u_{n+1} = L_{\ell} (v_{n+1}).
$$
\end{remark}
Following \cite{Lions}, we introduce an appropriate Hilbert space. In our case we take
$$
H = \left\{(u,v) \, : \, u \in H^1_{\partial \Omega \cap \partial \Omega_{\ell}}   
(\Omega_{\ell})  \,  \mbox{ and } \,
v \in L^2 (\Omega_{n\ell})   \right\}\,
$$
with the norm 
$$
\begin{array}{l}
\displaystyle
\| (u,v) \|_{H}^2 =
\int_{\Omega_\ell} \frac{|\nabla u(x)|^2}{2} {\rm d}x + \frac{1}{2}
\int_{\Omega_{n\ell} }  
\int_{\Omega_{n\ell} } J(x-y)(v(y)-v(x))^2\, {\rm d}y{\rm d}x \\[10pt]
\qquad \qquad \quad \displaystyle 
 + \frac{1}{2}\int_{\Omega_{n\ell}}\int_{\Omega_{\ell}} J(x-y)(u(y)-v(x))^2\, {\rm d}y{\rm d}x +
 \frac{1}{2}
\int_{_{\Omega_{n\ell}} }  v(x)^2
\int_{\Omega^c} J(x-y)\, {\rm d}y{\rm d}x.
 \end{array}
$$
\begin{lemma}\label{lemma:equivH}
The space $H$ is a Hilbert space, with a norm equivalent to that of the product space  $H^1 (\Omega_\ell) \times L^2 (\Omega_{n\ell})$.
\end{lemma}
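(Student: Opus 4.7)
The strategy is to establish that the $H$-norm is equivalent to the product norm on $H^1_{\partial\Omega\cap\partial\Omega_\ell}(\Omega_\ell)\times L^2(\Omega_{n\ell})$. Since that product space (a closed subspace of a product of two Hilbert spaces, the trace condition being closed) is already known to be a Hilbert space, completeness of $H$ will transfer automatically via the equivalence; and the $H$-norm visibly arises by polarization from a symmetric bilinear form, so $H$ is Hilbert.

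First I would establish the easy direction
$$
\|(u,v)\|_H^2 \le C \bigl(\|u\|_{H^1(\Omega_\ell)}^2 + \|v\|_{L^2(\Omega_{n\ell})}^2\bigr).
$$
This reduces to routine estimates: the gradient term is immediate; for each nonlocal double integral the inequality $(a-b)^2 \le 2a^2+2b^2$ combined with $\|J\|_\infty<\infty$ and $J\in L^1(\mathbb{R}^N)$ (both built into the hypotheses on $J$) suffices; the last term is bounded by $\|J\|_{L^1}\|v\|_{L^2(\Omega_{n\ell})}^2$.

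The reverse inequality $\|u\|_{H^1(\Omega_\ell)}^2+\|v\|_{L^2(\Omega_{n\ell})}^2 \le C\|(u,v)\|_H^2$ is the substantial part, and I would split it into the $u$ and $v$ components. The gradient term in $\|(u,v)\|_H^2$ controls $\|\nabla u\|_{L^2(\Omega_\ell)}^2$ directly; combined with $u|_{\partial\Omega\cap\partial\Omega_\ell}=0$ on the Lipschitz boundary of the connected set $\Omega_\ell$, the classical Poincaré inequality yields $\|u\|_{L^2(\Omega_\ell)}\le C\|\nabla u\|_{L^2(\Omega_\ell)}$, so $\|u\|_{H^1(\Omega_\ell)}^2 \le C\|(u,v)\|_H^2$. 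For $v$, I would apply the elementary inequality $(u(y)-v(x))^2 \ge \tfrac12 v(x)^2 - u(y)^2$ inside the coupling term of the $H$-norm, obtaining
$$
\int_{\Omega_{n\ell}}\!\int_{\Omega_\ell}\! J(x-y)(u(y)-v(x))^2\,{\rm d}y\,{\rm d}x \ge \tfrac12 \int_{\Omega_{n\ell}} v(x)^2 \int_{\Omega_\ell} J(x-y)\,{\rm d}y\,{\rm d}x - \|J\|_{L^1}\|u\|_{L^2(\Omega_\ell)}^2,
$$
where the negative term is already controlled by the previous step. Combining with the Dirichlet contribution $\int_{\Omega_{n\ell}} v(x)^2 \int_{\Omega^c} J(x-y)\,{\rm d}y\,{\rm d}x$ explicitly present in the $H$-norm, one controls $\int_{\Omega_{n\ell}} v(x)^2 h(x)\,{\rm d}x$ for the weight $h(x):=\int_{\Omega_{n\ell}^c} J(x-y)\,{\rm d}y$. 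Since $J$ is bounded and in $L^1$, $h$ is continuous, and the visibility hypothesis $(J1)$ forces $h(x)>0$ on the nonempty inner collar $\{x\in\Omega_{n\ell}:{\rm dist}(x,\partial\Omega_{n\ell})<2\delta\}$, a set of positive Lebesgue measure since $\Omega_{n\ell}$ is bounded and open. Lemma \ref{lemma:poincare} then delivers $\|v\|_{L^2(\Omega_{n\ell})}^2 \le C\|(u,v)\|_H^2$, completing the equivalence.

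The main obstacle I expect is this last verification, namely that the auxiliary weight $h$ is positive on a set of positive measure in $\Omega_{n\ell}$; this is the single point where the geometric assumptions and the visibility condition $(J1)$ are essential and where the argument goes beyond bookkeeping. Once the two-sided bound is in hand, completeness is immediate (a Cauchy sequence in $H$ is Cauchy in the product space, its limit retains the closed trace condition, and convergence in $H$-norm follows by applying the upper bound to differences), and $H$ is a Hilbert space under the stated inner product.
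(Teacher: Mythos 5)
Your overall strategy (two-sided norm equivalence, then transfer of completeness) is the right one, and your easy direction is exactly the routine estimate the paper waves at. Your $v$-step is also essentially sound: lower-bounding the coupling term, adding the exterior term, and invoking the weighted nonlocal Poincar\'e inequality of Lemma \ref{lemma:poincare} works, except that $h>0$ on the whole $2\delta$-collar of $\partial\Omega_{n\ell}$ is not guaranteed for rough $\Omega_{n\ell}$ (the complement can have zero measure near parts of the boundary); what is true, and all that Lemma \ref{lemma:poincare} needs, is that $h>0$ on \emph{some} set of positive measure inside $\Omega_{n\ell}$, which follows since $\Omega_{n\ell}$ is bounded (take points of $\Omega_{n\ell}$ nearly maximizing a coordinate).

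The genuine gap is in the $u$-component. You control $\|u\|_{L^2(\Omega_\ell)}$ by the classical Poincar\'e inequality using the trace condition $u|_{\partial\Omega\cap\partial\Omega_\ell}=0$, but that inequality requires the Dirichlet portion $\partial\Omega\cap\partial\Omega_\ell$ to carry positive surface measure, and nothing in the standing hypotheses $(J1)$, $(J2)$, $(1)$, $(2)$, $(P1)$ guarantees this: for instance, if $\Omega_{n\ell}$ is an annulus enclosing a ball $\Omega_\ell$, then $\partial\Omega\cap\partial\Omega_\ell=\emptyset$, nonzero constants are admissible for $u$, and $\|u\|_{L^2(\Omega_\ell)}\le C\|\nabla u\|_{L^2(\Omega_\ell)}$ fails. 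In such configurations the $L^2$-control of $u$ has to come through the coupling term $\int_{\Omega_{n\ell}}\int_{\Omega_\ell}J(x-y)(u(y)-v(x))^2$ together with the control of $v$ and the connectivity/visibility assumptions; this is precisely what the paper uses instead, quoting Lemma 3.3 of \cite{ABR}, a Poincar\'e-type inequality for the full coupled energy of the function extended by zero, which yields $\|(u,v)\|_H\ge C\bigl(\|u\|_{H^1(\Omega_\ell)}+\|v\|_{L^2(\Omega_{n\ell})}\bigr)$ in one stroke. Note also that your $v$-estimate produces a leftover $-C\|u\|^2_{L^2(\Omega_\ell)}$, so once the $u$-step breaks, the whole reverse inequality breaks. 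As written, your argument proves the lemma only under the extra assumption that $\partial\Omega\cap\partial\Omega_\ell$ has positive $(N-1)$-dimensional measure (true in the paper's figure and 1D example, but not part of the hypotheses); to cover the general case you must let the coupling term carry the $u$-bound, i.e.\ reprove or cite the coupled inequality of \cite{ABR}.
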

\begin{proof}
The obvious inner product associated to $\| (u,v) \|_{H}$ is
\begin{equation}\label{eq:pi}
 \begin{array}{l}
\displaystyle
\langle  (u,v), (z,w) \rangle_{H} =  
\frac12 \int_{\Omega_\ell}  \nabla u(x) \nabla z(x) {\rm d}x + \frac{1}{2}
\int_{_{\Omega_{n\ell}} }  v(x)w(x)
\int_{\Omega^c} J(x-y)\, {\rm d}y{\rm d}x\\[10pt]
\displaystyle 
\qquad \qquad \qquad \qquad \quad + \frac{1}{2}
\int_{\Omega_{n\ell} }  
\int_{\Omega_{n\ell}} J(x-y)(v(y)-v(x))(w(y)-w(x)) \, {\rm d}y{\rm d}x \\[10pt]
\qquad \qquad \qquad \qquad \quad \displaystyle 
 + \frac{1}{2}\int_{\Omega_{n\ell}}\int_{\Omega_{\ell}} J(x-y)(u(y)-v(x))(z(y)-w(x)) \, {\rm d}y{\rm d}x.
 \end{array}
\end{equation}

On the other hand, using the present notation, Lemma 3.3 in \cite{ABR} says that
$$
\int_{\Omega_\ell} \frac{|\nabla \tilde u(x)|^2}{2} {\rm d}x + \frac{1}{2}
\int_{\Omega_{n\ell} }  
\int_{\mathbb{R}^n } J(x-y)(\tilde u(y)-\tilde u(x))^2\, {\rm d}y{\rm d}x \ge
C\|\tilde u\|_{L^2(\Omega)},
$$
from which we obtain
$$\| (u,v) \|_{H}\ge C\left(\|u\|_{H^1(\Omega_\ell)}+\|v\|_{L^2(\Omega_{n\ell})}\right).$$ 
The other inequality, and therefore the equivalence, follows easily using the properties of $J.$ \end{proof}

Thanks to Remark \ref{rem:f=0} to show the convergence of the iterations we can work with $f=0$. Let us define the following two subspaces of $H$,
\begin{align*}
V_1 &= \left\{  (u,v) : u=L_{\ell}(v) \mbox{ with }  v\in L^2(\Omega_{n\ell}) \right\}, \\
V_2 & = \left\{  (u,v) : v=L_{n\ell}(u) \mbox{ with }  u\in H^1_{\partial \Omega_{\ell}\cap \partial \Omega}(\Omega_{\ell}) \right\}.
\end{align*}

\begin{lemma} \label{lema-suma-directa}
It holds that
$$
H = V_1 \oplus V_2.
$$
\end{lemma}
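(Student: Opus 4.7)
The plan is to verify the two defining conditions of a direct sum decomposition, namely $V_1 \cap V_2 = \{0\}$ and $V_1 + V_2 = H$. Both will reduce to the uniqueness statement in Theorem \ref{teo.1.22} applied with $f \equiv 0$; note that for $f=0$ the operators $L_\ell$ and $L_{n\ell}$ are linear, which is exactly what is needed.

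For the intersection, if $(u,v)$ lies in $V_1\cap V_2$ then simultaneously $u=L_\ell(v)$ and $v=L_{n\ell}(u)$, which says precisely that $(u,v)$ is a weak solution of the homogeneous coupled system \eqref{eq:main.Dirichlet.local.2277}--\eqref{eq:main.Dirichlet.nonlocal.2277}; the uniqueness stated in Theorem \ref{teo.1.22} then forces $(u,v)=(0,0)$.

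For the sum, fix $(u_0,v_0)\in H$ and look for $(a_1,b_1)\in V_1$ and $(a_2,b_2)\in V_2$ with $a_1+a_2=u_0$ and $b_1+b_2=v_0$. Parameterize $a_1=L_\ell(b_1)$ and $b_2=L_{n\ell}(a_2)=L_{n\ell}(u_0-a_1)$; using linearity, the coupling constraint $b_1+b_2=v_0$ collapses to the single equation
$$(I-K)\,b_1 \;=\; v_0-L_{n\ell}(u_0), \qquad K:=L_{n\ell}\circ L_\ell,$$
on $L^2(\Omega_{n\ell})$. Once $b_1$ is obtained, the remaining components are defined by $a_1=L_\ell(b_1)$, $a_2=u_0-a_1\in H^1_{\partial\Omega\cap\partial\Omega_\ell}(\Omega_\ell)$, $b_2=v_0-b_1\in L^2(\Omega_{n\ell})$, and one checks $(a_1,b_1)\in V_1$, $(a_2,b_2)\in V_2$ by construction.

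The main step, and the only serious obstacle, is therefore the invertibility of $I-K$ on $L^2(\Omega_{n\ell})$, which I plan to settle via the Fredholm alternative. Compactness of $K$ comes from the factorization
$$L^2(\Omega_{n\ell})\;\xrightarrow{\,L_\ell\,}\;H^1_{\partial\Omega\cap\partial\Omega_\ell}(\Omega_\ell)\;\hookrightarrow\;L^2(\Omega_\ell)\;\xrightarrow{\,L_{n\ell}\,}\;L^2(\Omega_{n\ell}),$$
where the first arrow is bounded by Lemma \ref{lema.est.u}, the middle inclusion is compact by Rellich--Kondrachov (recall $\Omega_\ell$ is bounded Lipschitz), and the last arrow is bounded: inspection of the proof of Lemma \ref{lema.est.v} shows that only the $L^2(\Omega_\ell)$ norm of $u$ ever enters the estimate, so $L_{n\ell}$ extends to a bounded operator from $L^2(\Omega_\ell)$ to $L^2(\Omega_{n\ell})$. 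Triviality of $\ker(I-K)$ is the same uniqueness argument as before: if $b=Kb$, then the pair $(L_\ell(b),b)$ belongs to $V_1\cap V_2=\{0\}$, so $b=0$. Hence $I-K$ is invertible, the decomposition exists, and combined with $V_1\cap V_2=\{0\}$ it is unique, giving $H=V_1\oplus V_2$. It is worth emphasizing that the compactness of $K$ relies crucially on the $H^1$-regularity produced by the local equation; without it, invertibility would require a quantitative contraction argument that is not obviously available, whereas the Fredholm route is robust and essentially automatic.
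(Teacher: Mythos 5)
Your proposal is correct and follows essentially the same route as the paper: the intersection is killed by uniqueness for the homogeneous coupled system, and the sum is reduced to solving $(I-K)v=\hat v-L_{n\ell}(\hat u)$ with $K=L_{n\ell}\circ L_{\ell}$ via the Fredholm alternative, compactness of $K$ coming from the $H^1$ bound on $L_\ell$ together with the compact embedding into $L^2(\Omega_\ell)$. Your only addition is to spell out the compactness factorization and the $L^2$-boundedness of $L_{n\ell}$ explicitly, which the paper asserts more briefly.
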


\begin{proof}
First notice that for $(u,v)\in V_1\cap V_2$ it should be
$u=L_{\ell}(v)$ and $v=L_{n\ell}(u)$. It means that $(u,v)$ solves the coupled system
\eqref{eq:main.Dirichlet.local.2277} and \eqref{eq:main.Dirichlet.nonlocal.2277} with $f=0$ and therefore $(u,v)=(0,0)$. This shows that $$V_1\cap V_2 = \{(0,0)\}.$$

Let now $(\hat{u},\hat{v})\in H$,  we claim that there exists a pair of functions $v\in L^2(\Omega_{\ell})$ and $u\in H^1_{\pp}(\Omega_\ell)$ such that
\begin{equation}\label{eq:suma}
(\hat{u},\hat{v})=(L_\ell(v),v)+(u,L_{n\ell}(u)),
\end{equation}
which would show that $$H=V_1+V_2.$$ Writting \eqref{eq:suma} as
\begin{align*}
 \hat u=& L_\ell(v)+u \\
 \hat v=& v + L_{n\ell}(u)
\end{align*}
we see that it is enough to show existence of solutions in $L^2(\Omega_{n\ell})$ for
\begin{equation}\label{eq:fredholm}
v-L_{n\ell}(L_{\ell} (v))=\hat v -L_{n\ell} (\hat u),
\end{equation}
since calling $u=\hat u-L_{\ell} (v)\in H^1_{\pp}(\Omega_{\ell})$, the pair $(u,v)$ solves \eqref{eq:suma}. Existence (and uniqueness) of a solution $v$ for \eqref{eq:fredholm} follows from the Fredholm alternative for the operator $I-K$ with $K=L_{n\ell}\circ L_{\ell}$. Indeed, since  $L_{\ell}$ is compact and $L_{n\ell}$ continuous we see $K:L^2(\Omega_{n\ell})\to L^2(\Omega_{n\ell})$ is compact and if $v$ is a solution of $v-L_{n\ell}(L_{\ell} (v))=0$ then $(L_{\ell} (v),v)$ is the unique solution of
the coupled system
\eqref{eq:main.Dirichlet.local.2277} and \eqref{eq:main.Dirichlet.nonlocal.2277} with $f=0$ and therefore $(L_{\ell} (v),v)=(0,0)$ which says in particular $v=0$. Fredholm alternative gives existence and uniqueness for $[I-K](v)=\hat w\in L^2(\Omega_{n\ell})$ and hence for \eqref{eq:fredholm}. The lemma follows.
\end{proof}
Now, let us define the linear operators
$$
P_i : H \mapsto H, \qquad 1\le i\le 2,
$$
given by
$$
P_1 (u,v) = (L_\ell (v), v),
$$
and
$$
P_2 (u,v) = (u, L_{n\ell} (u)).
$$

\begin{lemma} \label{proyection-i}
$P_i$ is the orthogonal projection
onto  $V_i$ w.r.t. the inner product \eqref{eq:pi}.
\end{lemma}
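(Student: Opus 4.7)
The plan is to verify the three defining properties of an orthogonal projection onto $V_i$: (i) idempotence $P_i^2=P_i$, (ii) $\mathrm{Range}(P_i)=V_i$, and (iii) the residual $(u,v)-P_i(u,v)$ is orthogonal to $V_i$ in the inner product \eqref{eq:pi}. Properties (i) and (ii) are immediate from the definitions: one has $P_1(u,v)=(L_\ell(v),v)\in V_1$, and if $u=L_\ell(v)$ then $P_1(u,v)=(u,v)$, so $P_1^2=P_1$ and $\mathrm{Range}(P_1)=V_1$; the statements for $P_2$ are analogous using $L_{n\ell}$. The entire content of the lemma therefore lies in (iii).

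For $P_1$, the residual is $(u-L_\ell(v),0)$. Pairing it in \eqref{eq:pi} with an arbitrary $(L_\ell(w),w)\in V_1$, every term involving the second component of the residual is killed, and one is left with
$$\frac{1}{2}\int_{\Omega_\ell}\nabla(u-L_\ell(v))\cdot \nabla L_\ell(w)\,{\rm d}x+\frac{1}{2}\int_{\Omega_{n\ell}}\!\!\int_{\Omega_\ell} J(x-y)(u-L_\ell(v))(y)(L_\ell(w)(y)-w(x))\,{\rm d}y\,{\rm d}x.$$
I would then recognize this expression as $\frac{1}{2}$ times the weak form of \eqref{eq:main.Dirichlet.local.2277} for $L_\ell(w)$ with $f=0$, tested against the admissible function $\varphi=u-L_\ell(v)\in H^1_{\partial\Omega\cap\partial\Omega_\ell}(\Omega_\ell)$; since $L_\ell(w)$ solves the local problem by definition, this vanishes.

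For $P_2$ the residual is $(0,v-L_{n\ell}(u))$, and in its pairing with $(z,L_{n\ell}(z))\in V_2$ the gradient term in \eqref{eq:pi} drops out. The three remaining contributions (the $\Omega^c$-weighted term, the symmetric double integral over $\Omega_{n\ell}\times\Omega_{n\ell}$, and the mixed integral over $\Omega_{n\ell}\times\Omega_\ell$) can be rearranged, after a short symmetrization in $x,y$ of the $\Omega_{n\ell}^2$ piece and using the decomposition $\mathbb{R}^N\setminus\Omega_{n\ell}=\Omega_\ell\cup\Omega^c$, into exactly $\frac{1}{2}$ times the weak form of \eqref{eq:main.Dirichlet.nonlocal.2277} for $L_{n\ell}(z)$ with $f=0$, tested against $\psi=v-L_{n\ell}(u)\in L^2(\Omega_{n\ell})$. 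By definition of $L_{n\ell}(z)$, this expression also vanishes.

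The only non-routine point is to notice that the inner product \eqref{eq:pi} has been deliberately tailored so that orthogonality to $V_i$ coincides with the variation of the full energy $E_i$ in the $u$- or $v$-direction, written down explicitly as the subdifferential $\partial E_i$ earlier in the paper; once this is observed there is essentially no obstacle and the verification reduces to a direct computation. The main thing to track carefully is the bookkeeping of the symmetrization $J(x-y)=J(y-x)$ and the splitting of the exterior domain in the $P_2$ case, which I expect to be the lengthiest (but routine) part of the argument.
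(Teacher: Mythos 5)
Your proposal is correct and follows essentially the same route as the paper: reduce the lemma to the orthogonality of the residual $(u,v)-P_i(u,v)$ to $V_i$ and identify the surviving terms of the inner product \eqref{eq:pi} as the weak form (with $f=0$) of the corresponding equation tested against the residual component. The only difference is that the paper writes out only the $P_1$ case and declares $P_2$ ``similar,'' whereas you sketch the symmetrization and the splitting $\mathbb{R}^N\setminus\Omega_{n\ell}=\Omega_\ell\cup\Omega^c$ needed for $P_2$ explicitly, which checks out.
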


\begin{proof}
We only show the case $P_1$ since the other one is similar.

All we need to prove is the orthogonality relation $(u,v)-P_1(u,v)=(u-L_\ell(v),0)\perp V_1$ that follows easily from the expression  \eqref{eq:pi}, indeed
$$
\begin{array}{l}
\displaystyle
\langle  (u-L_\ell(v),0), (L_\ell(w),w) \rangle_{H}=
\frac12 \int_{\Omega_\ell}  \nabla [u-L_\ell(v)](x) \nabla L_\ell(w)(x) {\rm d}x +  \\[10pt]
\qquad \qquad \displaystyle
 + \frac{1}{2}\int_{\Omega_{n\ell}}\int_{\Omega_{\ell}} J(x-y)[u-L_\ell(v)](y)(L_\ell(w)(y)-w(x)) \, {\rm d}y{\rm d}x=0,
 \end{array}
 $$
 where the last identity follows by using the fact that $L_\ell(w)$ is a weak solution of
 \begin{equation}  \label{eq:ecuL}
\begin{cases}
\displaystyle 0=\Delta L_\ell(w) (x) + \int_{\Omega_{n\ell}} J(x-y)(w(y)-L_\ell(w)(x))\,{\rm d} y, &x\in \Omega_\ell,
\\[7pt]
\displaystyle \partial_\eta L_\ell(w)(x)=0,\qquad & x\in  \partial \Omega_\ell \cap \Omega,  \\[7pt]
L_\ell(w)(x)= 0, & x \in \partial \Omega \cap \partial \Omega_\ell,
\end{cases}
\end{equation}
and  taking $\phi=u-L_\ell(v)\in H^1_{\pp}(\Omega_{\ell})$ as a test function.
\end{proof}
Noticing that $V_1$ and $V_2$ are closed,   an immediate corollary of Theorem I.1 in \cite{Lions} says that our alternating method converges strongly in the norm of our Hilbert space.
\begin{theorem} \label{th:conv_cont}
Under the assumptions $(J1)$, $(J2)$, $(1)$, $(2)$ and $(P1)$ and given $f \in L^2 (\Omega)$, the alternating method $(M)$ converges to the  unique minimizer $(u,v)$ of $E_i$ (equiv. to the unique weak solution of the system \eqref{eq:main.Dirichlet.local.2277}, \eqref{eq:main.Dirichlet.nonlocal.2277}). That is,
$$
u_n \to u \qquad \mbox{in } H^1 (\Omega_\ell),
$$
$$
v_n \to v \qquad \mbox{ in } L^2 (\Omega_{n\ell}).
$$

Moreover, since $V_1+V_2=H$, the method is geometricaly convergent. That is, there exists $0<\delta<1$ such that
$$
\|u_n - u\|_{H^1_{\pp}(\Omega_\ell)},\|v_n - v\|_{L^2(\Omega_{n\ell})}\le C \delta^n.
$$
\end{theorem}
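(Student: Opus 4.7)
The plan is to recognize the iteration $(M)$ as the composition of orthogonal projections onto $V_1$ and $V_2$ and then invoke Lions' Theorem I.1 in \cite{Lions} directly, since all the structural work has been packaged into the preceding lemmas.

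First, following Remark \ref{rem:f=0}, I would reduce to the case $f=0$ and show that the iteration, starting from arbitrary $u_0$, converges to $(0,0)$ in the norm of $H$. Pick any $v_0 \in L^2(\Omega_{n\ell})$ (for example $v_0 = 0$); the choice is immaterial because $P_2$ depends only on the first coordinate. One then checks by unwinding the definitions of $P_1, P_2$ from Lemma \ref{proyection-i} that
\begin{equation*}
P_2(u_0,v_0) = (u_0, L_{n\ell}(u_0)) = (u_0, v_1), \qquad P_1(u_0, v_1) = (L_\ell(v_1), v_1) = (u_1, v_1),
\end{equation*}
so inductively $(u_n, v_n) = (P_1 P_2)^n (u_0,v_0)$ for every $n\ge 1$.

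Second, I would verify that the hypotheses of Lions' Theorem I.1 are satisfied in our Hilbert space $H$ (which is a genuine Hilbert space by Lemma \ref{lemma:equivH}). Closedness of $V_1$ and $V_2$ is immediate: each is the graph of a bounded linear operator between Hilbert spaces (Lemmas \ref{lema.est.u} and \ref{lema.est.v}), and graphs of bounded operators are closed. Moreover $V_1\cap V_2 = \{(0,0)\}$ was established inside the proof of Lemma \ref{lema-suma-directa}. Lions' theorem then yields the strong convergence $(P_1 P_2)^n (u_0,v_0) \to P_{V_1\cap V_2}(u_0, v_0) = (0,0)$ in $H$, and by the norm equivalence of Lemma \ref{lemma:equivH} this is exactly
\begin{equation*}
u_n \to 0 \text{ in } H^1(\Omega_\ell), \qquad v_n \to 0 \text{ in } L^2(\Omega_{n\ell}),
\end{equation*}
which translates back (undoing the subtraction of the true solution) to the stated convergence to $(u,v)$.

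Third, for the geometric rate, I would appeal to the second part of Lions' theorem, which produces an explicit contraction rate $\|(P_1 P_2)^n\|_{H\to H} \le \delta^n$ with $\delta \in (0,1)$ whenever $V_1 + V_2$ is closed in $H$ (equivalently, whenever the Friedrichs angle between $V_1$ and $V_2$ is strictly positive). In our setting Lemma \ref{lema-suma-directa} gives the stronger statement $V_1 + V_2 = H$, which is trivially closed, so this rate is available. Combined with the norm equivalence of Lemma \ref{lemma:equivH} and the identification $(u_n - u, v_n - v) = (P_1 P_2)^n(u_0 - u, v_0 - v)$, this yields the desired $\|u_n - u\|_{H^1(\Omega_\ell)} + \|v_n - v\|_{L^2(\Omega_{n\ell})} \le C \delta^n$. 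I do not expect any substantive obstacle here; the only care needed is in matching the statement of Lions' theorem to our particular $P_1, P_2, V_1, V_2$ and in confirming that the direct-sum decomposition (rather than mere trivial intersection) is what powers the geometric rate.
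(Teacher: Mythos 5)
Your proposal is correct and takes essentially the same route as the paper: the paper's proof is exactly the observation that $V_1$ and $V_2$ are closed, the iteration is alternating orthogonal projection via $P_1,P_2$ (Lemma \ref{proyection-i}), Theorem I.1 of \cite{Lions} gives strong convergence, and the geometric rate follows from $V_1+V_2=H$ (Lemma \ref{lema-suma-directa}). You merely spell out the details the paper leaves implicit (reduction to $f=0$ via Remark \ref{rem:f=0}, the identification $(u_n,v_n)=(P_1P_2)^n(u_0,v_0)$, and closedness of $V_1,V_2$ as graphs of the bounded operators from Lemmas \ref{lema.est.u} and \ref{lema.est.v}), all of which are accurate.
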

\begin{remark}
In stark contrast with classical domain decomposition applied to local PDEs under Dirichlet boundary conditions, no \emph{overlapping}  of the underlying subdomains is needed in order to get geometric convergence.
\end{remark}

\begin{remark} \label{remark.pepe} {\rm
The previous approach shows the benefits of understanding the problem considered in
\cite{ABR}  as a system composed by a local equation
for the first component, $u$, in $\Omega_{\ell}$
whose main part is the Laplacian,
\begin{equation}  \label{eq:main.Dirichlet.local.227788}
\begin{cases}
\displaystyle - f(x)=\Delta u (x) - \left( \int_{\Omega_{n\ell}} J(x-y) {\rm d} y \right)  u(x)+ \int_{\Omega_{n\ell}} J(x-y)v(y)\,{\rm d} y, &x\in \Omega_\ell,
\\[7pt]
\displaystyle \partial_\eta u(x)=0,\qquad & x\in  \partial \Omega_\ell \cap \Omega,  \\[7pt]
u(x)= 0, & x \in \partial \Omega \cap \partial \Omega_\ell,
\end{cases}
\end{equation}
and a nonlocal equation for the second component $v$ in $\Omega_{n\ell}$,
\begin{equation}  \label{eq:main.Dirichlet.nonlocal.227788}
\left\{
\begin{array}{l}
\displaystyle \!\! - f(x) =  2 \! \int_{\Omega_{n\ell}} J(x-y)  (\! v(y)-v(x) \!){\rm d}y - \!\! \left(\int_{\mathbb{R}^N\setminus \Omega_{n\ell}} J(x-y) {\rm d}y \right) v(x)
\\[7pt] \displaystyle \qquad \qquad \qquad \qquad \qquad \qquad \qquad\qquad\qquad\qquad +
 \!\! \int_{ \Omega_{\ell}} J(x-y)u(y) {\rm d}y, \qquad
\, x \in \Omega_{n\ell}, \\[7pt]
\displaystyle \!\! v(x)= 0,\qquad  x\in \mathbb{R}^N\setminus \Omega.
\end{array} \right.
\end{equation}
}
\end{remark}

Next, we show that the iterations provide a monotone approximation to the
solution when the initial function $u_0$ is the local part of a subsolution.

To this end we need the following definition.

 \begin{definition}\label{def:sub-super}
 We call $(\underline{u},\underline{v})$ a subsolution to
 \eqref{eq:main.Dirichlet.local.2277}--\eqref{eq:main.Dirichlet.nonlocal.2277}
 if it verifies
 \begin{equation}  \label{eq:main.Dirichlet.local.2277sub}
\begin{cases}
\displaystyle - f(x) \leq \Delta \uu (x) + \int_{\Omega_{n\ell}} J(x-y)(\vv(y)-\uu(x))\,{\rm d} y, &x\in \Omega_\ell,
\\[7pt]
\displaystyle \partial_\eta \uu(x)\leq 0,\qquad & x\in  \partial \Omega_\ell \cap \Omega,  \\[7pt]
\uu(x)\leq  0, & x \in \partial \Omega \cap \partial \Omega_\ell,
\end{cases}
\end{equation}
 in $\Omega_{\ell}$ and
\begin{equation}  \label{eq:main.Dirichlet.nonlocal.2277sub}
\left\{
\begin{array}{l}
\displaystyle \!\! - f(x) \leq \!\! \int_{\mathbb{R}^N\setminus \Omega_{n\ell}} J(x-y)(\! \uu(y)-\vv(x)\! ) {\rm d}y
+ 2 \! \int_{\Omega_{n\ell}} J(x-y)  (\! \vv(y)-\vv(x) \!){\rm d}y, \\[7pt]
\qquad\qquad\qquad\qquad\qquad\qquad\qquad\qquad\qquad\qquad \qquad\qquad\qquad
\, x \in \Omega_{n\ell}, \\[7pt]
\displaystyle \!\! \uu(x) \leq 0,\qquad  x\in \mathbb{R}^N\setminus \Omega.
\end{array} \right.
\end{equation}

 Similarly, a supersolution $(\overline{u},\overline{v})$ is a function that verifies
 \eqref{eq:main.Dirichlet.local.2277sub}--\eqref{eq:main.Dirichlet.nonlocal.2277sub}
 with the reverse inequalities,
 \end{definition}

For  \eqref{eq:main.Dirichlet.local.2277} and for \eqref{eq:main.Dirichlet.nonlocal.2277}
we have a comparison principle. For the proofs we refer to \cite{ElLibro} and \cite{evans}.
For the local part of our problem we have the following lemma.

\begin{lemma} \label{lema-compar-local} Fix $v$. Let $\underline{u}$ be a subsolution to
\eqref{eq:main.Dirichlet.local.2277}, that is, function that verifies
\eqref{eq:main.Dirichlet.local.2277sub} and $u$ a solution to
\eqref{eq:main.Dirichlet.local.2277} with the same $v$. Then
\begin{equation} \label{compar.local}
\underline{u} (x) \leq u (x), \qquad x \in \Omega_{\ell}.
\end{equation}
\end{lemma}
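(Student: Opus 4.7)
The plan is to rewrite the local equation in standard form so that the classical elliptic comparison principle applies, the only non-standard twist being the mixed Dirichlet/Neumann boundary and the extra zeroth-order potential coming from the nonlocal coupling. Concretely, since $v$ is frozen, I set
\[
c(x) := \int_{\Omega_{n\ell}} J(x-y)\,\mathrm{d}y \ge 0, \qquad F(x) := f(x) + \int_{\Omega_{n\ell}} J(x-y)\,v(y)\,\mathrm{d}y,
\]
and I observe that \eqref{eq:main.Dirichlet.local.2277} is equivalent to the linear elliptic problem $-\Delta u + c(x)u = F(x)$ in $\Omega_\ell$ with $u=0$ on $\partial\Omega\cap\partial\Omega_\ell$ and $\partial_\eta u = 0$ on $\partial\Omega_\ell\cap\Omega$. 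Reading \eqref{eq:main.Dirichlet.local.2277sub} in the same way, $\underline{u}$ satisfies $-\Delta \underline{u} + c(x)\underline{u} \le F(x)$ with $\underline{u}\le 0$ on $\partial\Omega\cap\partial\Omega_\ell$ and $\partial_\eta \underline{u}\le 0$ on $\partial\Omega_\ell\cap\Omega$.

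Next I would introduce $w := \underline{u}-u$, which by linearity satisfies the homogeneous inequality $-\Delta w + c(x) w \le 0$ in $\Omega_\ell$ in the weak sense, together with $w\le 0$ on $\partial\Omega\cap\partial\Omega_\ell$ and $\partial_\eta w \le 0$ on $\partial\Omega_\ell\cap\Omega$. The key test function is $w^+ := \max\{w,0\}$. This choice is admissible: since $w\le 0$ on the Dirichlet portion of the boundary, $w^+\in H^1_{\partial\Omega\cap\partial\Omega_\ell}(\Omega_\ell)$, and the Neumann condition $\partial_\eta w\le 0$ on the remaining portion of the boundary is exactly what is needed so that integrating by parts against the nonnegative test function $w^+$ produces a nonpositive boundary contribution. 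Plugging $w^+$ in therefore gives
\[
\int_{\Omega_\ell} |\nabla w^+|^2\,\mathrm{d}x + \int_{\Omega_\ell} c(x)\,(w^+)^2\,\mathrm{d}x \;\le\; 0.
\]

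Since both integrands are nonnegative, each vanishes identically. From $\nabla w^+\equiv 0$ in $\Omega_\ell$ (which is connected by hypothesis (1)) we deduce that $w^+$ is constant on $\Omega_\ell$, and then from $w^+ = 0$ on the trace $\partial\Omega\cap\partial\Omega_\ell$ this constant must be zero, yielding \eqref{compar.local}. The main obstacle I anticipate is the borderline situation in which $\partial\Omega\cap\partial\Omega_\ell$ has zero surface measure (so the trace argument is vacuous); in that case I would instead use the second integral: by hypothesis $(P1)$ there is a point of $\Omega_\ell$ closer than $\delta$ to $\Omega_{n\ell}$, so the visibility property $(J1)$ guarantees $c(x)>0$ on a set of positive measure in $\Omega_\ell$, and combined with $w^+$ being constant this again forces $w^+\equiv 0$. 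Once \eqref{compar.local} is established, the analogous statement for the nonlocal piece (needed later) follows by exactly the same scheme using the Poincaré-type inequality of Lemma \ref{lemma:poincare} in place of the Dirichlet energy.
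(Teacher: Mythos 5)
Your argument is correct, but it is worth noting that the paper itself does not prove this lemma at all: it simply refers the reader to \cite{ElLibro} and \cite{evans} for the comparison principles. What you have written is therefore a self-contained substitute rather than a variant of the paper's proof. Your route is the standard weak maximum principle via truncation: freeze $v$, absorb the coupling into a nonnegative potential $c(x)=\int_{\Omega_{n\ell}}J(x-y)\,{\rm d}y$ and a right-hand side $F$, set $w=\underline{u}-u$, and test the homogeneous inequality with $w^+$, which is admissible because $w\le 0$ on $\partial\Omega\cap\partial\Omega_\ell$ and because the sign condition $\partial_\eta w\le 0$ on $\partial\Omega_\ell\cap\Omega$ makes the boundary contribution nonpositive; the identities $\nabla w\cdot\nabla w^+=|\nabla w^+|^2$ and $w\,w^+=(w^+)^2$ then give $\int_{\Omega_\ell}|\nabla w^+|^2+\int_{\Omega_\ell}c\,(w^+)^2\le 0$, and connectedness of $\Omega_\ell$ plus either the Dirichlet trace or the positivity of $c$ on a set of positive measure (which indeed follows from $(J1)$ together with $(P1)$) forces $w^+\equiv 0$. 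This buys something the paper's citation does not make explicit: it handles in one stroke the mixed Dirichlet/Neumann boundary, the nonlocal zeroth-order term, and the borderline case in which $\partial\Omega\cap\partial\Omega_\ell$ is negligible, so the conclusion does not silently rely on the Dirichlet portion being nontrivial. Two small points of hygiene: the subsolution inequalities of Definition \ref{def:sub-super} must be read in the weak sense (tested against nonnegative functions of $H^1_{\pp}(\Omega_\ell)$), which is exactly the form your computation uses and is the natural interpretation in this variational setting; and the conclusion \eqref{compar.local} is obtained almost everywhere in $\Omega_\ell$, which is the correct meaning here since $u,\underline{u}$ are only $H^1$ functions. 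The closing remark that the nonlocal comparison (Lemma \ref{lema-compar-nonlocal}) follows by the same $w^+$ scheme is also sound, since $(a-b)(a^+-b^+)\ge (a^+-b^+)^2$ makes the nonlocal form coercive on $w^+$, though that is a separate statement and not needed for this lemma.
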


The corresponding statement for the nonlocal part reads as follows.

\begin{lemma}  \label{lema-compar-nonlocal} Fix $u$. Let $\underline{v}$ be a subsolution to
\eqref{eq:main.Dirichlet.nonlocal.2277}, that is, function that verifies
\eqref{eq:main.Dirichlet.nonlocal.2277sub} and $v$ a solution to
\eqref{eq:main.Dirichlet.nonlocal.2277} with the same $u$. Then
\begin{equation} \label{compar.nonlocal}
\underline{v} (x) \leq v (x), \qquad x \in \Omega_{n\ell}.
\end{equation}
\end{lemma}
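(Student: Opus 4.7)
The plan is to run a standard $w^{+}$-test (truncation-by-positive-part) argument directly on the pointwise inequality obtained by subtracting the two equations. The main obstacle, which is handled neatly by Lemma~\ref{lemma:poincare}, will be to turn an identity of the form ``two nonnegative quantities sum to $\leq 0$'' into a genuine vanishing statement, since the nonlocal quadratic form alone does not control $\|w^{+}\|_{L^{2}}$.

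First I would set $w := \vv - v$ and subtract the equation for $v$ (with the given $u$) from the subsolution inequality \eqref{eq:main.Dirichlet.nonlocal.2277sub} for $\vv$ (with the same $u$). The terms involving $f$ and $u(y)$ cancel, and one is left with the pointwise inequality
\begin{equation*}
h(x)\,w(x) + 2\int_{\Omega_{n\ell}} J(x-y)\bigl(w(x)-w(y)\bigr)\,{\rm d}y \;\leq\; 0, \qquad x\in\Omega_{n\ell},
\end{equation*}
where $h(x):=\int_{\mathbb{R}^{N}\setminus\Omega_{n\ell}} J(x-y)\,{\rm d}y \ge 0$ is the same continuous weight that appeared in the proof of Lemma~\ref{lema.est.v}; in particular $h>0$ on a set of positive measure in $\Omega_{n\ell}$ by $(J1)$ together with the boundedness of $\Omega_{n\ell}$.

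Next I would multiply this inequality by $w^{+}(x)=\max\{w(x),0\}$ and integrate over $\Omega_{n\ell}$. The first term yields $\int_{\Omega_{n\ell}} h\,(w^{+})^{2}$. For the double integral I would symmetrize by exchanging the roles of $x$ and $y$ and using $J(x-y)=J(y-x)$; this rewrites
\begin{equation*}
2\int_{\Omega_{n\ell}}\!\!\int_{\Omega_{n\ell}} J(x-y)\bigl(w(x)-w(y)\bigr)w^{+}(x)\,{\rm d}y\,{\rm d}x
= \int_{\Omega_{n\ell}}\!\!\int_{\Omega_{n\ell}} J(x-y)\bigl(w(x)-w(y)\bigr)\bigl(w^{+}(x)-w^{+}(y)\bigr)\,{\rm d}y\,{\rm d}x.
\end{equation*}
The pointwise elementary bound $(a-b)(a^{+}-b^{+})\geq (a^{+}-b^{+})^{2}$ then gives
\begin{equation*}
\int_{\Omega_{n\ell}} h(x)\,\bigl(w^{+}(x)\bigr)^{2}{\rm d}x + \int_{\Omega_{n\ell}}\!\!\int_{\Omega_{n\ell}} J(x-y)\bigl(w^{+}(x)-w^{+}(y)\bigr)^{2}{\rm d}y\,{\rm d}x \;\leq\; 0.
\end{equation*}
Since both summands are nonnegative, each vanishes. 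Finally, I would apply the second inequality of Lemma~\ref{lemma:poincare} with this $h$ to the function $w^{+}\in L^{2}(\Omega_{n\ell})$, concluding $\|w^{+}\|_{L^{2}(\Omega_{n\ell})}=0$, i.e.\ $\vv\le v$ a.e.\ in $\Omega_{n\ell}$, which is \eqref{compar.nonlocal}. The only delicate point is to check that $h$ qualifies for Lemma~\ref{lemma:poincare}: this is ensured by $(J1)$, because near any boundary point of $\Omega_{n\ell}$ relative to $\Omega^{c}\cup\Omega_{\ell}$ (which exists since $\Omega_{n\ell}$ is bounded) the kernel $J$ is bounded below by $C>0$ on a set of positive measure, making $h$ strictly positive on a neighborhood of such a point.
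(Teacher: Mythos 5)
Your argument is correct, but it is worth noting that the paper does not actually prove this lemma: it only refers to \cite{ElLibro} and \cite{evans} for both comparison principles. Your truncation argument therefore supplies a self-contained proof, and it does so with exactly the ingredients the paper already has on hand: the weight $h(x)=\int_{\mathbb{R}^N\setminus\Omega_{n\ell}}J(x-y)\,{\rm d}y$ and the weighted Poincar\'e inequality of Lemma \ref{lemma:poincare} are the same pair used in the proof of Lemma \ref{lema.est.v}, so passing from ``both nonnegative terms vanish'' to $w^{+}=0$ is legitimate, and your symmetrization identity and the elementary bound $(a-b)(a^{+}-b^{+})\ge (a^{+}-b^{+})^{2}$ are both fine (as is $w\,w^{+}=(w^{+})^{2}$ in the zero-order term). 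Two small refinements. First, since $v,\vv$ live only in $L^{2}(\Omega_{n\ell})$, the step ``multiply the pointwise inequality by $w^{+}$ and integrate'' should be phrased as taking $w^{+}\in L^{2}(\Omega_{n\ell})$ as a test function in the weak formulation of \eqref{eq:main.Dirichlet.nonlocal.2277} and of the subsolution inequality; the resulting computation is identical. Second, your justification that $h>0$ on a set of positive measure (``near any boundary point'') is vaguer than needed: the cleanest route is hypothesis $(P1)$ combined with $(J1)$ --- since $dist(\Omega_{\ell},\Omega_{n\ell})<\delta$ and $J\ge C>0$ on the ball of radius $2\delta$, every $x\in\Omega_{n\ell}$ at distance less than $\delta$ from $\Omega_{\ell}$ sees a positive-measure portion of $\Omega_{\ell}\subset\mathbb{R}^N\setminus\Omega_{n\ell}$ inside its horizon, and such points form a set of positive measure in $\Omega_{n\ell}$. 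With these cosmetic fixes the proof stands as a valid alternative to the external citation.
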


With these two lemmas we can show the desired monotonicity for the sequence generated
by the iterations.

\begin{theorem} \label{teo.monotono}
Let $(\underline{u},\underline{v})$ be a subsolution to
\eqref{eq:main.Dirichlet.local.2277}--\eqref{eq:main.Dirichlet.nonlocal.2277}. Then, the sequences $u_n$, $v_n$ generated by
$$(M) \, \begin{cases}
v_{n+1} &= L_{n \ell}^f (u_{n}) \\
u_{n+1} &= L_{\ell}^f (v_{n+1}).
\end{cases}
$$
with
$$
u_0 = \underline{u}
$$
is monotone, that is, it holds that
$$
\underline{u}(x) \leq u_n (x) \leq u_{n+1} (x) \leq u(x), \qquad x \in \Omega_{\ell},
$$
and
$$
\underline{v}(x) \leq v_n (x) \leq v_{n+1} (x)\leq v(x), \qquad x \in \Omega_{n\ell}.
$$
\end{theorem}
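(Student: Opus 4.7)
My plan is a straightforward induction on $n$, using Lemmas \ref{lema-compar-local} and \ref{lema-compar-nonlocal} together with the monotonicity of each equation's right-hand side in the ``frozen'' variable, and then bootstrapping to the upper bound $u_n\le u$, $v_n\le v$ via the strong convergence of Theorem \ref{th:conv_cont}.

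In the base case I would observe that, since $u_0=\underline{u}$, the nonlocal part $\underline{v}$ of the subsolution is, essentially by definition, a subsolution of \eqref{eq:main.Dirichlet.nonlocal.2277} with $u$ frozen at $\underline{u}$, while $v_1=L_{n\ell}^f(\underline{u})$ is the genuine solution with the same frozen $u$; Lemma \ref{lema-compar-nonlocal} then gives $\underline{v}\le v_1$. Next, $\underline{u}$ is a subsolution of \eqref{eq:main.Dirichlet.local.2277} with $v=\underline{v}$, and because $J\ge 0$ the coupling term $\int_{\Omega_{n\ell}} J(x-y)v(y)\,{\rm d}y$ is monotone in $v$, so replacing $\underline{v}$ by the larger $v_1$ only enlarges the right-hand side; thus $\underline{u}$ remains a subsolution with $v=v_1$, and Lemma \ref{lema-compar-local} delivers $\underline{u}\le u_1$. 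The inductive step is the very same two-move argument: assuming $u_{n-1}\le u_n$ and $v_{n-1}\le v_n$, the solution $v_n$ of the nonlocal problem with $u=u_{n-1}$ is a subsolution of the same problem with the larger $u=u_n$ (this time because $\int_{\Omega_\ell} J(x-y)u(y)\,{\rm d}y$ is monotone in $u$), so Lemma \ref{lema-compar-nonlocal} yields $v_n\le v_{n+1}$, and the parallel step based on Lemma \ref{lema-compar-local} gives $u_n\le u_{n+1}$.

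The upper bound by the exact solution I would obtain by combining the just-proved pointwise monotonicity with the $L^2$ bounds from Lemmas \ref{lema.est.u} and \ref{lema.est.v} to extract pointwise a.e.\ limits $u_n\uparrow u^{\ast}$ in $\Omega_\ell$ and $v_n\uparrow v^{\ast}$ in $\Omega_{n\ell}$, and then identifying $u^{\ast}=u$ and $v^{\ast}=v$ a.e.\ via the strong convergence supplied by Theorem \ref{th:conv_cont} (which in particular forces a.e.\ convergence along a subsequence). The one point where I expect friction is making crystal clear the ``monotonicity in the frozen variable'' used repeatedly above: it rests only on $J\ge 0$ and on the linearity of each subproblem, yet it is the crux that lets the subsolution property propagate across each half-step of the alternating scheme. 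Once that observation is in place, both the base case and the induction run themselves, and no extra coupled-system comparison principle is needed.
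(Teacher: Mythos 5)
Your proposal is correct and its core is the same as the paper's: an induction in which each half-step is handled by the comparison lemmas (Lemma \ref{lema-compar-nonlocal} for the nonlocal solve, Lemma \ref{lema-compar-local} for the local solve), with the key observation — which the paper leaves implicit — that since $J\ge 0$ the frozen coupling terms $\int_{\Omega_{n\ell}}J(x-y)v(y)\,{\rm d}y$ and $\int_{\Omega_{\ell}}J(x-y)u(y)\,{\rm d}y$ are monotone, so the subsolution property propagates when the frozen datum is replaced by a larger one. Where you genuinely deviate is the upper bound $u_n\le u$, $v_n\le v$: the paper's terse proof does not detail it (the natural reading is that one compares each iterate with the exact pair $(u,v)$, which solves each subproblem with the dominating frozen component, again by the same two lemmas), whereas you obtain it by passing to the monotone pointwise limit and identifying it with $(u,v)$ through the strong convergence of Theorem \ref{th:conv_cont}. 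That route is legitimate and not circular, since Theorem \ref{th:conv_cont} is established independently via Lions's framework, and it indeed avoids invoking any comparison statement involving the coupled system; its only cost is that the conclusion $u_n\le u$, $v_n\le v$ is then naturally an almost-everywhere statement (which is the right reading for weak solutions anyway), and that it leans on the convergence theorem where a one-line inductive comparison with the exact solution would suffice.
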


\begin{proof} Let us start with $v_1$. From the comparison lemma
\ref{lema-compar-nonlocal} with $\underline{u}$ fixed we get that
$$
v_1(x) \geq \underline{v} (x), \qquad x \in \Omega_{n\ell},
$$
recall that $v_1$ is a solution to \eqref{eq:main.Dirichlet.nonlocal.2277}
with $\underline{u}$.

Now, for $u_2$ (that is a solution to \eqref{eq:main.Dirichlet.local.2277}
with $v_1$, using the comparison lemma for the local part of the problem,
Lemma \ref{lema-compar-local}, we get
$$
u_1(x) \geq \underline{u} (x)=u_0 (x), \qquad x \in \Omega_{\ell}.
$$

From our previous arguments, the proof of the general monotonicity property for the sequences follows by
induction.
\end{proof}

A particular case where this result can be applied is when the source term $f$
is nonnegative,
$$
f(x) \geq 0, \qquad x \in \Omega.
$$
Then we get that $\underline{u} = \underline{v} = 0$ is a subsolution to
\eqref{eq:main.Dirichlet.local.2277}--\eqref{eq:main.Dirichlet.nonlocal.2277}
and therefore the sequences $u_n$, $v_n$ are monotone and nonnegative.

In the general case, assuming that $f$ is bounded, we have that
$\underline{u} = \underline{v} = -C$ is a subsolution to
\eqref{eq:main.Dirichlet.local.2277}--\eqref{eq:main.Dirichlet.nonlocal.2277}
for a large constant $C$.
Therefore, we can ensure that the sequences $u_n$, $v_n$ are monotone
for every bounded $f$ just by choosing the initial function $u_0$ as a large
negative constant.

\begin{remark} From the results in \cite{ABR} we know that when $f$ is continuous and the kernel $J$ is smooth
the solution is continuous up to the boundary in both the local and the nonlocal domains (but not
necessarily across the interface). In this case if we start with a continuous
subsolution we obtain increasing sequences $u_n$, $v_n$ of continuous 
functions that converge to a continuous limit and hence we conclude that the
convergence of the iterations to the solution is uniform in this case.
\end{remark}

\begin{remark}
Similar computations prove that when we start with a supersolution instead of a subsolution the method
provides decreasing sequences. 
\end{remark}

\section{Abstract Numerical Setting}
\label{sect-numerical}

The approach developed in the continuous case can be straightforwardly applied at the discrete level. For the sake of completeness and before giving some numerical examples, we provide the standard basic theoretical details.

For any pair of conforming Galerkin approximation spaces
$$
\U\subset H^1_{\pp}(\Omega_\ell) \qquad \mbox{and}  \qquad
\V\subset L^2(\Omega_{n\ell})
$$
with inherited norms,  we consider the discrete operator $L_{\ell,D}^f:\V\to \U$,
$$
L_{\ell,D}^f (v_D) = u_D,
$$
where, for a given $v_D\in \V$,  $u_D$ stands for the solution of
$$
\int_{\Omega_{\ell}}\nabla u_D (x) \nabla \bar u_D  + \int_{\Omega_{\ell}}\int_{\Omega_{n\ell}} J(x-y)(v_D(y)-u_D(x))\bar u_D(x)\,{\rm d} y\,{\rm d} x=\int_{\Omega_{\ell}}f(x)\bar u_D(x) ,
$$
for all $\bar u_D\in \U$. In a similar way, we define $L_{n\ell,D}^f:\U\to \V$,
$$
L_{n\ell,D}^f (u_D) = v_D,
$$
where, for a given $u_D\in \U$,  $v_D$ stands for the solution of
\begin{align*}  \! &\int_{\Omega_{n\ell}} \int_{\Omega_{n\ell}} J(x-y)  (\! v_D(y)-v_D(x) \!)(\! \bar v_D(y)-\bar v_D(x) \!){\rm d}y {\rm d}y + \\ &\int_{\Omega_{n\ell}}\bar v_D(x)\int_{\Omega_{n\ell}^c} J(x-y)(\! u_D(y)-v_D(x)\! ) {\rm d}y {\rm d}x
=\int_{\Omega_{n\ell}} f(x)\bar v_D(x),
\end{align*}
for all $\bar v_D\in \V$.  Both operators are well defined, as one can see from the proofs of Lemma \ref{lema.est.u} and  Lemma \ref{lema.est.v} respectively, and the whole theory of the previous section applies for the following discrete alternating method: for  $u_{D,0}\in \U$, define  the sequence
$$(M_D) \, \begin{cases}
v_{D,n+1} &= L_{n \ell}^f (u_{D,n}) \\
u_{D,n+1} &= L_{\ell}^f (v_{D,n+1}).
\end{cases}
$$
In particular, as before, we can prove geometric convergence in the strong norm to the weak solution of the coupled system posed in the variational  product space $\U\times \V$, that is, to the Galerkin approximation of the original coupled system. Taking, in particular,  finite element spaces $\U_h$ and $\V_{\tilde{h}}$ associated to not necessarily equal mesh parameters ($h$ and ${\tilde{h}}$) and  with possibly different polynomial degree based elements, we can easily build discrete approximations. Moreover, calling $u_{h}$ and $v_{\tilde{h}}$ the solutions in $\U_h$ and $\V_{\tilde{h}}$, convergence of
$(u_h,v_{\tilde{h}})\to (u,v)$ for $h,\tilde{h}\to 0$ follows by standard arguments. Actually, since $(u,v)$ is a weak solution of the coupled system \eqref{eq:main.Dirichlet.local.2277},
\eqref{eq:main.Dirichlet.nonlocal.2277}, proceeding in the usual way, we get the orthogonality of the error $e_{h,{\tilde{h}}}=(u-u_h,v-v_{\tilde{h}})$ in the scalar product $\langle\cdot,\cdot\rangle_H$ and as consequence, the best approximation property in the natural norm $\|\cdot\|_H$, 
$$
\begin{array}{l}
\displaystyle
\|(u-u_h,v-v_{\tilde{h}})\|_H^2=\langle (u-u_h,v-v_{\tilde{h}}),(u-\pi_{\U_h}(u),v-\pi_{\V_{\tilde{h}}}(v))\rangle_H \\[10pt]
\displaystyle \qquad \qquad \le \|(u-u_h,v-v_{\tilde{h}})\|_H\|(u-\pi_{\U_h}(u),v-\pi_{\V_{\tilde{h}}}(v))\|_H
\end{array}
$$
for appropriate interpolation or projection operators $\pi_{\U_h}$, $\pi_{\V_{\tilde{h}}}$ onto $\U_{h}$ and $\V_{\tilde{h}}$ respectively.
Using the equivalence of norms stated in Lemma \ref{lemma:equivH}, we  get
$$
\begin{array}{l}
\displaystyle
\|(u-u_h,v-v_{\tilde{h}})\|_H
\le \|(u-\pi_{\U_h}(u),v-\pi_{\V_{\tilde{h}}}(v))\|_H \\[10pt]
\qquad \displaystyle \le C\left(\|u-\pi_{\U_h}(u)\|_{H^1(\Omega_\ell)}+\|v-\pi_{\V_{\tilde{h}}}(v)\|_{L^2(\Omega_{n\ell})}\right).
\end{array}
$$
Naturally, at this point, we need to assume some regularity for the  solutions, if we want to guarantee convergence with order. Since our nonlocal kernels are quite general, a priori regularity results are not easy to provide without assuming further ad-hoc hypotheses. Consider for instance  a smooth and bounded source function $f$, then we see from equations \eqref{eq:main.Dirichlet.local.227788} and \eqref{eq:main.Dirichlet.nonlocal.227788} that smooth compactly supported kernels $J$ provide smooth (in their corresponding domains) solutions $u$ and $v$ (not necessarily continuous across the interface). Indeed, once the existence of solutions is proved in $H$ we readily  notice from \eqref{eq:main.Dirichlet.local.227788} that $u$ is actually better than $H^1 (\Omega_\ell)$, let say $H^2(\Omega_\ell)$, for smooth $\Omega_\ell$ (or  even for a convex polygon in $\RR^2$). This in turn says, from
\eqref{eq:main.Dirichlet.nonlocal.227788}, that $v$ is at least as smooth as $f$ and $J$.

Finite element approximations for  nonlocal equations require several computational considerations. For kernels that are not compactly supported, unbounded regions of integration should be taken into account. This difficulty can be addressed, at least for radial kernels, by taking appropriate exterior meshes (see for instance \cite{ABB} for the case of the fractional Laplacian). Moreover, for highly singular kernels, specific quadrature techniques are needed, see again \cite{ABB}. In this paper, however, only  integrable kernels are considered and in order to keep implementation issues simple we restrict our attention to the case of a bounded and compactly supported $J$.  We give next a shallow description of the FE setting where, in addition,  we assume a single mesh parameter $h$ for the sake of simplicity.   

Calling  $B(0,R)$, the ball centered at the origin and radius $R$, such that  $supp(J)\subset B(0,R)$, we need to  mesh an enlarged set, $\Omega_{R}=\{x\in \RR^n: d(x,\Omega)<R\}$, in order to compute nonlocal interactions within the horizon of $J$.  Furthermore,   we consider a finite element triangulation
$\cup_{T\in \mathcal{T}_h}T=\bar\Omega_{R}$ which is (simultaneously) admissible  for $\Omega_{\ell}$, $ \Omega_{n\ell}$ and $\Omega_{R}\setminus \Omega$. $\mathcal{T}_h$ is made up of elements $T$ of diameter $h_T$ with  inner radius $\rho_T$ and that are regular in the  standard way: $\exists \sigma > 0$ such that  $h_T \leq \sigma \rho_T$ for every $ T \in \mathcal{T}_h$. At this point, the degree of the underlying polynomial spaces can be chosen in different ways, in particular they should not necessarily agree in both, the local and nonlocal domains. The assembling involves local terms associated to the  laplacian and nonlocal ones of the kind
\begin{equation}
\label{eq:Inmij}
I_{m,n}^{i,j}:=\int_{T_m}\int_{T_n}J(x-y)(\phi_i(x)-\phi_i(y))(\phi_j(x)-\phi_j(y))\,dx\,dy,
\end{equation}
for pairs of elements $T_m,T_n\in \mathcal{T}_h$ and finite element basis functions $\phi_i,\phi_j$. The needed basis functions are those corresponding to nodes belonging to $\Omega$, since we are dealing with homogenous boundary (or complementary) conditions. However, notice that there are nontrivial contributions of $I_{m,n}^{i,j}$  involving elements contained in $\bar \Omega_{R}\setminus \Omega$. Indeed, consider for instance the following term of \eqref{eq:Inmij},
$$
\int_{T_m}\int_{T_n}J(x-y)\phi_i(x)\phi_j(x)\,dx\,dy,
$$
for pairs of elements such that $T_m\subset \bar \Omega_{R}\setminus \Omega$, $T_n\subset \Omega_{n\ell}$. If the set $K= supp(\phi_i)\cap supp(\phi_j) \cap T_n$ is nonempty and if the distance between $K$ and $T_m$ is less than $R$, then
$$
\int_{T_m}\int_{T_n}J(x-y)\phi_i(x)\phi_j(x)\,dx\,dy \neq 0.
$$
Also notice that since  the support of $J(x-y)\phi_i(x)\phi_j(y)$ can be a proper subset of $T_m\times T_n$, some care is needed in handling the numerical integration. 
 
\section{A 1D Numerical Example}

The following 1D numerical example takes continuous $P_1$ for all the involved elements although piecewise constants could be fine in the nonlocal subdomain $\Omega_{n\ell}$, as a conforming approximation of $L^2(\Omega_{n\ell})$. We consider the domains: $$\Omega_{n\ell}=(-1,0), \qquad \Omega_{\ell}=(0,1),$$
and the source $$f=(1-x)^4.$$ For simplicity, the kernel $J$ is piecewise linear 
 $$ J(x) = \frac{ (1-\frac{|x|}{\delta}) }{\delta} \chi_{[- \delta, \delta]}(x)$$
with $\delta = 0.5$ or $\delta=0.3$. 

In Figure \ref{fig:example1d} we show, for a fixed $h$, with $\delta=0.5$, the FE limit solution as well as some intermediate iterates.  As it was proved the iterates converge geometrically.

\begin{figure}
\begin{center}
    \includegraphics[scale=.23]{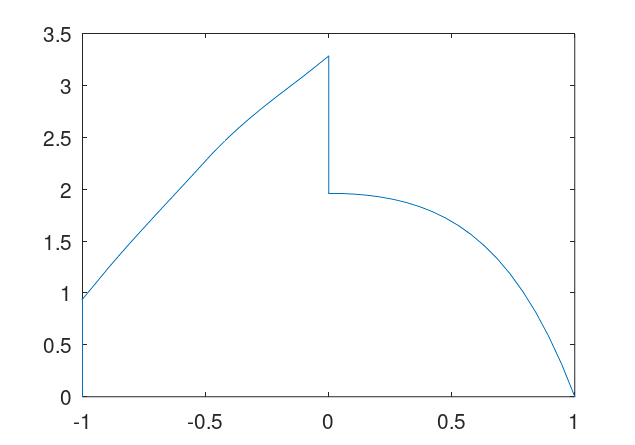}
  \includegraphics[scale=.23]{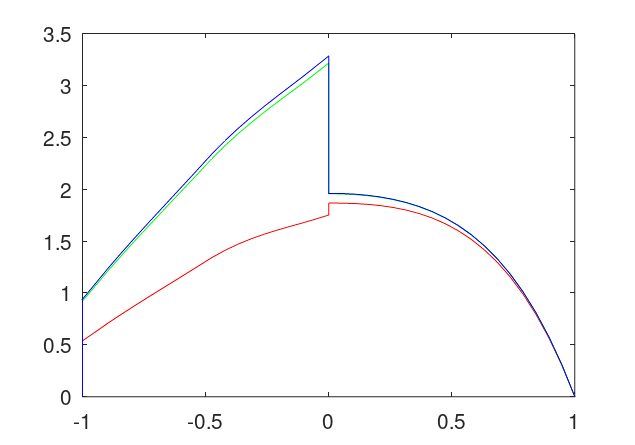}
  \includegraphics[scale=.27]{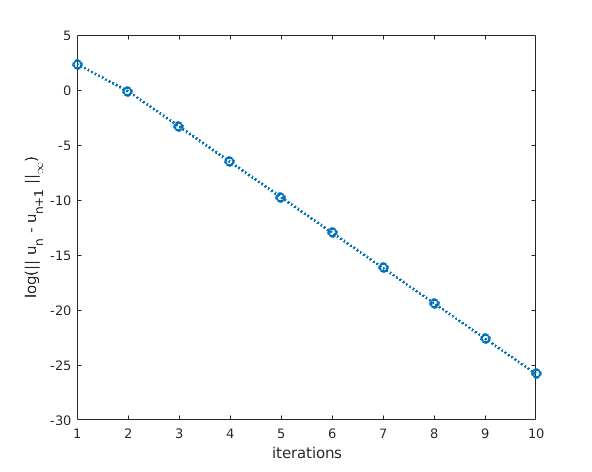}
  
    		\caption{{Limit Solution (left) and  three Schwarz $(M)$ iterates (middle) for the example. We use $n=100$ nodes and continuous piecewise linear FE. The discontinuity across the domains is visible. Only a few iterations are needed as we count with geometric convergence (right). Notice also that the convergence is monotone increasing.}}
\end{center}
\label{fig:example1d}
\end{figure}

\begin{remark}[Monotone Approximations]
The described monotonic behavior for the continuous level can be expected in the discrete counterpart (for a fixed discretization) if the maximum principle holds for the discrete version of both the local and nonlocal operators. In particular, for 1D problems and uniform meshes, this is the case in the local part if we use the $P_1$ FE space (with mass lumping for the lowest order term). In our numerical example this behavior is apparent (see Figure \ref{fig:example1d}).
\end{remark} 

\begin{remark}[Parallel Schwarz]
The parallel Schwarz version for the present case of \emph{two subdomains} (i.e. one local and one nonlocal) reads:
for a given
$u_0$ and $v_0$, we define the sequence 
$$(M_P) \, \begin{cases}
v_{n+1} &= L_{n \ell}^f (u_{n}) \\
u_{n+1} &= L_{\ell}^f (v_{n}).
\end{cases}
$$
 $(M_P)$ is obviously relatable to the classical Schwarz $(M)$ since in this simple two-subdomain case the sequence computed with $(M_P)$ on a subdomain coincides every two steps  with the sequence computed on that subdomain by $(M)$. From the matrix counterpart they can be seen, in the discrete level, as block Jacobi vs. block Gauss-Seidel iterative methods. For a \emph{multi-domain case}, on the contrary,   $(M_P)$ has advantages versus the standard $(M)$ method and although we did not treat the theory we give a numerical example in Figure \ref{fig:parvssec}.   We focus only on a purely nonlocal case with three subdomains and $f(x)=1$. Remarkably, the nonlocal nature of the problem brings some benefits even for the standard alternating method $(M)$ since for that case and again not overlapping is needed.

\begin{figure}\begin{center}
    \includegraphics[scale=.35]{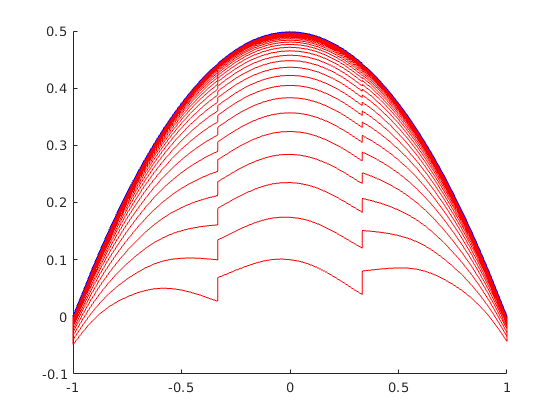}
  \includegraphics[scale=.35]{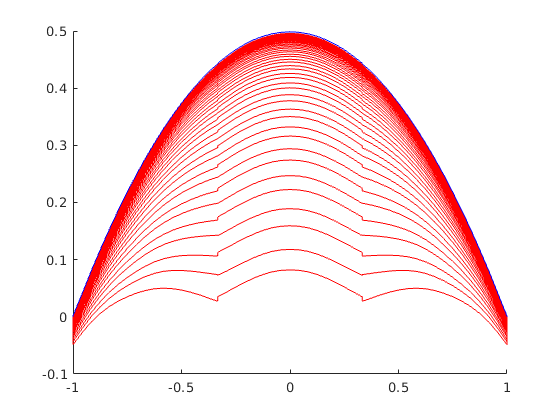}
    		\caption{{Alternating (left) vs. Parallel (right) Schwarz. Only the nonlocal model, with three subdomains. As before, no superposition is needed, since the variational space is $L^2(\Omega_{n\ell})$.  In blue, the limit solution with $f(x)=1$, $\delta=0.3$ and $n=100$ nodes is displayed.}}
\end{center}
\label{fig:parvssec}
\end{figure}

\end{remark}


\bibliographystyle{plain}

\begin{thebibliography}{99}

\bibitem{ABB} Acosta G.; Bersetche, F. M.; Borthagaray, J. P. \emph{A short FE implementation for a 2d homogeneous Dirichlet problem of a fractional Laplacian.} Comput. Math. Appl. 74, No. 4, (2017) 784--816.

\bibitem{ABR} Acosta, G.; Bersetche, F. M.; Rossi, J. D. \emph{Local and nonlocal energy-based coupling models}. To appear in SIAM Jour. Math. Anal.

\bibitem{Peri1} Azdoud, Y.; Han, F.; Lubineau, G. {\it A morphing framework to couple non-local and local anisotropic
continua}. Inter. J. Solids Structures 50(9),  (2013), 1332--1341.

\bibitem{ElLibro}
   Andreu-Vaillo, F.;  Toledo-Melero, J.; Mazon, J. M.;  
  Rossi, J. D.
\newblock \emph{Nonlocal diffusion problems}.
\newblock Number 165. American Mathematical Soc., 2010.


\bibitem{Peri2} Badia, S.; Bochev, P.; Lehoucq, R.; Parks, M.; Fish, J.; Nuggehally, M.A.; Gunzburger, M. {\it A forcebased
blending model for atomistic-to-continuum coupling}. Inter. J. Multiscale
Comput. Engineering, 5(5), (2007), 387--406.

\bibitem{Peri3} Badia, S.; Parks, M.; Bochev, P.; Gunzburger, M.; Lehoucq, R. {\it On atomistic-to-continuum coupling
by blending}. Multiscale Modeling Simulation, 7(1), (2008), 381--406.


\bibitem{BCh} Bates, P.; Chmaj, A. \emph{An integrodifferential
model for phase transitions: stationary solutions in higher
dimensions}. J. Statist. Phys. 95 (1999), no.\,5--6, 1119--1139.

\bibitem{Bere} Berestycki, H., Coulon, A.-Ch.; Roquejoffre, J-M.; Rossi, L. 
{ \it The effect of a line with nonlocal diffusion on Fisher-KPP propagation.} Math. Models Meth.  Appl. 
Sciences, 25.13, (2015), 2519--2562.


\bibitem{brezis}
Brezis, H. \emph{Functional analysis, Sobolev spaces and partial differential equations}. Springer Science \& Business Media, 2010.




\bibitem{CF} Carrillo, C.; Fife, P. \emph{Spatial effects in discrete generation population models}. J. Math. Biol. 50 (2005), no.\,2, 161--188.


\bibitem{ChChRo} Chasseigne, E.; Chaves, M.; Rossi, J.\,D. \emph{Asymptotic behavior for nonlocal diffusion equations}. J. Math. Pures Appl. (9) 86 (2006), no.\,3, 271--291.


\bibitem{CERW} Cort\'azar, C.; M. Elgueta, M.; Rossi, J.\,D.; Wolanski, N. \emph{Boundary fluxes for non-local diffusion}.   J. Differential Equations 234 (2007), no.\,2, 360--390.

\bibitem{Cortazar-Elgueta-Rossi-Wolanski}  Cort\'azar, C.; M. Elgueta, M.; Rossi, J.\,D.; Wolanski, N. \emph{How to approximate the heat equation with Neumann boundary conditions by nonlocal diffusion problems}. Arch. Ration. Mech. Anal. 187 (2008), no.\,1, 137--156.


\bibitem{delia2} D'Elia, M.; Perego, M.; Bochev, P.; Littlewood, D. \emph{A coupling strategy for nonlocal and local diffusion models with mixed volume constraints and boundary conditions}. Comput. Math. Appl. 71 (2016), no.\,11, 2218--2230.

\bibitem{delia3} D'Elia, M.; Ridzal, D.; Peterson, K.\,J.; Bochev, P.; Shashkov, M. \emph{Optimization-based mesh correction with volume and convexity constraints}. J. Comput. Phys. 313 (2016), 455--477.


\bibitem{delia}
D'Elia, M.; Bochev, P. \emph{Formulation, analysis and computation of an optimization-based local-to-nonlocal coupling method}. arXiv:1910.11214.

\bibitem{SUR} 
D'Elia, M.; Li, X.; Seleson, P.; Tian, X.; Yu, Y. {\it A review of Local-to-Nonlocal coupling methods in nonlocal diffusion and nonlocal mechanics.}
arXiv:1912.06668.



\bibitem{DiPaola} Di Paola, M.; Giuseppe F.; M. Zingales. { \it Physically-based approach to the mechanics of strong non-local linear elasticity theory.} Journal of Elasticity 97.2 (2009): 103--130.


\bibitem{Du} Du, Q.; Li, X.\,H.; Lu, J.; Tian, X. \emph{A quasi-nonlocal coupling method for nonlocal and local diffusion models}. SIAM J. Numer. Anal. 56 (2018), no.\,3, 1386--1404.

\bibitem{evans}
Evans, L.C. \emph{Partial Differential Equations}. Second edition. Graduate Studies in Mathematics, 19. American Mathematical Society, Providence, RI, 2010.


\bibitem{F} Fife, P. \emph{Some nonclassical trends in parabolic and parabolic-like evolutions}. In \lq\lq Trends in nonlinear analysis'', 153--191, Springer, Berlin, 2003.


\bibitem{Gal} Gal, C. G.; Warma, M. {\it Nonlocal transmission problems with fractional diffusion and boundary conditions on non-smooth interfaces.} Communications in Partial Differential Equations, 42(4) (2017), 579--625.

\bibitem{Gander} Gander, M. J. {\it Schwarz methods over the course of time.} ETNA, Electron. Trans. Numer. Anal. 31, 228-255 (2008).

\bibitem{GQR} G\'{a}rriz, A.; Quir\'os, F.; Rossi, J. D. {\it Coupling local and nonlocal evolution equations.} Calc. Var. PDE, 
59(4), article 117, (2020), 1--25.


\bibitem{Han} Han, F., Gilles L.; { \it Coupling of nonlocal and local continuum models by the Arlequin approach.} Inter. Journal Numerical Meth. 
Engineering 89.6 (2012): 671--685.


\bibitem{Hutson} Hutson, V.; Martinez, S.; Mischaikow, K.; Vickers, G. T. {\it The evolution of dispersal}. J. Math. Biology, 47(6), (2003), 483--517.


\bibitem{Kri} Kriventsov, D. {\it Regularity for a local-nonlocal transmission problem}. Arch. Ration.
Mech. Anal. 217 (2015), 1103--1195.

\bibitem{Lions} P.L. Lions {\it On the Schwarz alternating method. I}  1st International Symposium on Domain Decomposition Methods for Partial Differential Equations, SIAM, Philadelphia, (1988), pp. 1-42.

\bibitem{MenDu} 
Mengesha, T. and Du, Q., {\it The bond-based peridynamic system with Dirichlet-type volume constraint}, Proc. Roy. Soc. Edinburgh Sect. A, (2014), Nro. 1, p.p. 161-186.


\bibitem{Schwarz} Scwharz, H. A. , \"Uber einen Grenz\"ubergang durch alternierendes Verfahren, Vierteljahrsschrift der Naturforschenden Gesellschaft in Z\"urich, 15 (1870), pp. 272-286.


\bibitem{Sel2} Seleson, P.; Gunzburger, M. {\it Bridging methods for atomistic-to-continuum coupling and their implementation}.
Comm. Comput. Physics, 7(4), (2010), 831. 

\bibitem{Sel3} Seleson, P.; Gunzburger, M.; Parks, M.L. {\it Interface problems in nonlocal diffusion and sharp transitions
between local and nonlocal domains}. Comput. Methods Appl. Mech. Engineering,
266, (2013), 185-204.


\bibitem{Sil} Silling, S. A.; {\it Reformulation of elasticity theory for discontinuities and long-range forces}. Jour. Mech. Physics Solids, 48(1), 
 2000, 175---209.
 

\bibitem{Sil1} Silling, S. A.; Lehoucq, R. B.; { \it Peridynamic theory of solid mechanics}. In Advances in applied mechanics (Vol. 44, pp. 73-168). Elsevier, 2010.


\bibitem{Sil2} S. A. Silling, M. Epton, O. Weckner, J. Xu, 
and E. Askari. {\it Peridynamic
states and constitutive modeling.} Journal of Elasticity, 
88:151-184, 2007.

\bibitem{Strick} Strickland, C.; Gerhard D.; Patrick D. S.; { \it Modeling the presence probability of invasive plant species with nonlocal dispersal.} Jour.
Math. Biology 69.2 (2014), 267--294.


\bibitem{W} Wang, X. \emph{Metastability and stability of patterns in a convolution model for phase transitions}. J. Differential Equations 183 (2002), no.\,2, 434-461.

\bibitem{Z} Zhang, L. \emph{Existence, uniqueness and exponential stability of traveling wave solutions of some integral differential equations
arising from neuronal networks}. J. Differential Equations 197 (2004), no.\,1, 162-196.


\end{thebibliography}

\end{document}